\documentclass{article}
\usepackage[margin=1in]{geometry}
\usepackage[utf8]{inputenc}
\usepackage{amsfonts}       % blackboard math symbols
\usepackage{nicefrac}       % compact symbols for 1/2, etc.
\usepackage{microtype}      % microtypography
\usepackage[export]{adjustbox} 

\usepackage{amsmath}
\usepackage[colorinlistoftodos]{todonotes}
\usepackage{bm}
\usepackage{breqn}
\usepackage{physics}
\usepackage{enumitem}
\usepackage{bbold}
\usepackage{epigraph}

\usepackage{csquotes}
\usepackage{hyperref}       % hyperlinks
\usepackage{url}            % simple URL typesetting
\usepackage{booktabs}       % professional-quality tables
\usepackage{amsfonts}       % blackboard math symbols
\usepackage{nicefrac}       % compact symbols for 1/2, etc.
\usepackage{microtype}      % microtypography
\usepackage{color}
\usepackage{scalerel}
\usepackage[toc,page]{appendix}
\usepackage{tabularx,ragged2e,booktabs,caption}
\usepackage[font=small,labelfont=bf]{caption}
\usepackage{blindtext}
\usepackage{amssymb}
\usepackage{mathtools}
\usepackage{dsfont}
\usepackage{amsmath}
\allowdisplaybreaks
\usepackage{amsfonts}
\usepackage{amsthm}
\usepackage{cleveref}
\usepackage{float}
\usepackage{graphicx}
\usepackage{tikz}
\usepackage{subcaption}
\graphicspath{ {images/} }
\usepackage{letltxmacro}%
\usepackage{thmtools,thm-restate}
\usepackage{relsize}
\usepackage{algorithm,algpseudocode}
\usepackage{titlesec}
\usepackage{multirow}
\usepackage[algo2e]{algorithm2e}
\usepackage{algpseudocode}

\newcommand{\x}{\boldsymbol x}
\newcommand{\by}{\boldsymbol y}

\newcommand{\y}{\boldsymbol y}

\newcommand{\e}{\mathrm{e}}

\newcommand{\R}{\mathbb R}
\newcommand{\N}{\mathbb N}

\newcommand{\bu}{\boldsymbol{u}}

\newcommand{\xl}{{x_{\ell}}}

\newcommand{\vl}{{v_{\ell}}}

\newcommand{\bv}{\boldsymbol{v}}

\newcommand{\bU}{\boldsymbol{U}}

\renewcommand{\phi}{\varphi}

\newcommand{\bx}{\boldsymbol{x}}
\newcommand{\bbf}{\boldsymbol{f}}
\newcommand{\bG}{\boldsymbol{G}}

\newcommand{\bsigma}{\boldsymbol{\sigma}}
\newcommand{\bomega}{\boldsymbol{\omega}}
\newcommand{\bzeta}{\boldsymbol{\zeta}}

\DeclareMathOperator{\Acc}{Acc}

\newcommand{\weakstar}{\overset{*}{\rightharpoonup}}

\renewcommand{\:}{\mathrel{\coloneqq}}

\newcommand{\bA}{\boldsymbol{A}}

\renewcommand{\dd}{\ensuremath{\,\mathrm{d}}}

\renewcommand{\epsilon}{\varepsilon}

 % Erase me later; shortcut for red comments
\newcommand{\Ia}{I_{\text{a}}}
\newcommand{\Ih}{I_{\text{h}}}

%\crefname{ineq}{Inequality}{Inequalities}
%\creflabelformat{ineq}{#2{(#1)}#3}
%\crefname{assumption}{Assumption}{Assumptions}

\newtheorem{theorem}{Theorem}[section]   % numbered within sections
\newtheorem{lemma}[theorem]{Lemma}       % share the same counter as theorem
\newtheorem{assumption}[theorem]{Assumption}
\newtheorem{remark}[theorem]{Remark}

\newtheorem{definition}[theorem]{Definition}
\newtheorem{corollary}[theorem]{Corollary}

\crefname{assumption}{assumption}{assumptions}
\Crefname{assumption}{Assumption}{Assumptions}
\crefname{corollary}{corollary}{corollaries}
\Crefname{corollary}{Corollary}{Corollaries}

    \definecolor{golden}{RGB}{253,181,21}

\newcommand{\car}[1]{
    \draw[draw=black,fill=yellow!90!black,thick,color=yellow!90!black] (1.5,.65) circle (.03);
        \draw[draw=black,fill=red!80!black,thick,color=red!80!black] (
    0.0,.65) circle (.03);
    
    \draw[black]
        [draw=black,fill=black,rounded corners=0.0ex, thin](0,0.52)--(-0.05,0.52)--(-0.05,0.54)--(0.0,0.54);
        
        \draw[draw=black,fill=white!50!black,thick,color=white!50!black] (
    -0.15,.56) circle (.025);    
        \draw[draw=black,fill=white!50!black,thick,color=white!50!black] (
    -0.13,.54) circle (.015); 
        \draw[draw=black,fill=white!50!black,thick,color=white!50!black] (
    -0.10,.53) circle (.01);     
    
    \shade[top color=#1!70!black, bottom color=white, shading angle={135}]
        [draw=black,fill=red!20,rounded corners=0.2ex, thin](0,0.5)--(1.5,0.5)--(1.5,0.75)--(1.3,0.75)--(1,1.0)--(0.5,1)--(0.1,0.75)--(0.0,0.75)--(0,0.5);
    \shade[top color=blue!40!white, bottom color=white, shading angle={135}]
        [draw=black,fill=red!20,rounded corners=0.2ex, thin](0.35,0.8)--(0.75,0.8)--(0.75,0.95)--(0.55,0.95)--(0.35,0.8);
    \shade[top color=blue!30!white, bottom color=white, shading angle={135}]
        [draw=black,fill=red!20,rounded corners=0.2ex, thin](0.85,0.8)--(1.15,0.8)--(0.95,0.95)--(0.85,0.95)--(0.85,0.8);
    \draw[draw=black,fill=gray!50,thick] (0.3,.5) circle (.12);
    \draw[draw=black,fill=gray!50,thick] (1.2,.5) circle (.12);    
    \draw[draw=black,fill=gray!80,semithick] (0.3,.5) circle (.1);
    \draw[draw=black,fill=gray!80,semithick] (1.2,.5) circle (.1);
}

\title{Optimal Control of ODE Car-Following Models
\\ {\smaller Applications to Mixed-Autonomy Platoon Control via Coupled Autonomous Vehicles }}
\author{Arwa Alanqary \thanks{Department of Electrical Engineering and Computer Science, University of California, Berkeley, US}.
\and Alexandre M. Bayen \footnotemark[2]
\and Xiaoqian Gong \thanks{Department of Mathematics and Statistics, Amherst College, US}
\and Anish Gollakota \footnotemark[2]
\and Alexander Keimer \thanks{Department of Mathematics, University of Rostock, Germany}
\and Ashish Pandian \footnotemark[2]
}
\date{}

\begin{document}

\maketitle

\begin{abstract}
In this paper, we study the optimal control of a mixed-autonomy platoon driving on a single lane to smooth traffic flow. The platoon consists of autonomous vehicles, whose acceleration is controlled, and human-driven vehicles, whose behavior is described using a microscopic car-following model.
We formulate the optimal control problem where the dynamics of the platoon are describing through a system of non-linear ODEs, with explicit constraints on both the state and the control variables.
Theoretically, we analyze the well-posedness of the system dynamics under a reasonable set of admissible controls and establish the existence of minimizers for the optimal control problem.
To solve the problem numerically, we propose a gradient descent-based algorithm that leverages the adjoint method, along with a penalty approach to handle state constraints.
We demonstrate the effectiveness of the proposed numerical scheme through several experiments, exploring various scenarios with different penetration rates and distributions of controlled vehicles within the platoon.
\end{abstract}

\section{Introduction}
Congestion in traffic networks is a long-standing challenge that has widespread societal and economic implications.
During the past decades, a significant amount of research has been done to understand the sources and mitigate the effects of traffic congestion. 
Different phenomena have been understood as contributors to congestion, the presence of physical bottlenecks, like ramp merging and lane merging, contributes to congestion by limiting the flow of vehicles. 
However, even in the absence of these physical bottlenecks, the phenomenon of stop-and-go traffic waves is a well-known issue \cite{treiterer1974hysteresis, suh2016empirical}.
These waves result from the inherent instability in traffic flow, which is influenced by the collective behavior of human drivers. 
Small disturbances, such as sudden braking or acceleration by one driver, can propagate through the traffic stream, leading to (the amplification of) congestion.
The seminal work of Sugiyama et al \cite{sugiyama2008traffic} provides clear evidence and experimental demonstration of the formation and propagation of stop-and-go traffic waves due to human car-following instability in a circular platoon. 

While physical bottlenecks can be tackled by infrastructure fixes, mitigating traffic congestion due to stop-and-go waves requires influencing human driving behavior, including car following behavior. 
Major efforts have been established in studying and implementing such mitigation. 
Ramp metering \cite{hou2007freeway, papageorgiou2002freeway} is one of the classical approaches to mitigate stop-and-go congestion, and it is still in use today. 
Variable speed limit control is another technique that modifies speed limits on the road to influence traffic flow and reduce congestion \cite{papageorgiou2008effects, hegyi2005optimal}. 

Recent advances in vehicle automation have created a promising new paradigm for effective traffic control. 
Modern vehicles are increasingly equipped with sophisticated sensing, communication, and automation technologies that enable them to interact with their environment and with each other in real time. 
This paves the way for a gradual transition from primarily human-driven traffic to a mixture of autonomous and human-driven vehicles, a regime referred to as mixed-autonomy traffic. 
Even at low penetration rates, the presence of autonomous vehicles (AVs) offers a promising opportunity for efficient control of transportation systems, as they can serve as actuators for regulating traffic flow. 
By controlling the acceleration and the position of AVs within a platoon, they can smooth traffic, reduce congestion, and improve overall road safety, even in environments where human drivers remain predominant.
%The main hypothesis is that by introducing a few controlled vehicles in predominantly human-driven traffic, they can serve as mobile actuators to influence the overall traffic patterns. 

This idea has been tested in simulation and experimentally over the years with focus on improving traffic energy efficiency, throughput, average speed, among other metrics. 
%
% The main focus of such studies include improving car-following \cite{}, lane-changing \cite{}, and ramp merging behavior \cite{} in human-driven traffic through the integration of controlled vehicles.
%
We refer to some of the earlier work on intelligent cruise control and platoon control in \cite{ioannou1993intelligent,darbha1999intelligent,davis2004effect,rajamani1998design,martinez2007safe}, and more recent developments in \cite{delle2019feedback,kerner2018physics,stern2018dissipation, jia2020lstm,vinitsky2023optimizing,lichtle2022deploying, fu2024kernel} among many others. 
This control paradigm has shown considerable success in attenuating stop-and-go waves and improving traffic conditions.  
For example, Stern et al. \cite{stern2018dissipation} demonstrated, in a field experiment, that introducing a single AV equipped with a proportional-integral controller can dissipate stop-and-go waves in a circular platoon. 
More recently, Lee et al. \cite{lee2024traffic} demonstrated the effectiveness of utilizing AVs as Lagrangian traffic actuators to reduce traffic flow instability, based on a large-scale field experiment in which 100 AVs were deployed in dense highway traffic.

In the present work, our aim is to examine the potential and limitation of this control paradigm and to create a baseline for the performance of AVs in attenuating stop-and-go waves by means of optimal control.
We consider the simple setup of a single-lane car following behavior and assume perfect knowledge of downstream traffic conditions. Under such idealistic assumptions, we examine the performance limit that can be achieved by introducing controlled vehicles in human-driven traffic. 
With this goal in mind, we formulate the task of car-following in mixed autonomy traffic as an optimal control problem (OCP) over a fixed time horizon and with pro-specified leader trajectory. 
We model the non-linear dynamics of a mixed-autonomy platoon where the AVs are controlled in their acceleration and the human-driven vehicle's acceleration is governed by a car-following model. 
We then formulate an objective functional that considers the performance of the AVs as well as all the HVs that follow. 
We theoretically analyze the proposed problem, involving proving the existence of minimizers under reasonable assumption on the car following model and reasonable admissible control set. 
We then propose a numerical method to solve the optimal control problem that utilizes a gradient descent algorithm with adjoint-based formulation of the required gradients.

\subsection{Traffic Modeling} Central to our modeling assumptions in this work is the choice of the car-following model which describes the driving behavior of vehicles on single lane traffic. Since the early 1950s, researchers have been developing such models and using them to simulate and understand various traffic phenomena. These models try to capture how a vehicle adjusts its acceleration in response to the input it observes of the leading vehicle to maintain a desired speed and safety distance. These observations often include the space gap, relative velocity and the leader vehicle's velocity. Among these models, the most commonly studied and used models are the intelligent driver model \cite{treiber2000congested}, and the class of optimal velocity models initially introduced by Bando \cite{bando1995dynamical}. For a more comprehensive overview, see \cite{ahmed2021review, li2012microscopic}. Despite their popularity in various applications, limited theoretical guarantees exist for the well-posedness of these car following models dynamics (see for instance \cite{albeaik_2022_limitations}). 

In this study, we use the \textit{Bando-follow the leader} model (Bando-FtL) which is a mixture of the original optimal velocity model (OVM) proposed by Bando and an additional \textit{follow the leader} term. This model has provable well-posedness guarantees and explicit bounds on the acceleration under very mild conditions on the lead vehicle trajectory as outlined in \cite{gong2022well}. These features are desirable for numerical simulations and necessary for proving the well-posedness of the optimal control problem considered, (i.e., the existence of a minimizer of the optimal control problem based on these dynamics). 
%\todo[inline]{Also here, I would feel that more citations would be good}

\subsection{Optimization based traffic control through autonomous vehicle}  
Optimal control is a prevalent tool for modeling and solving various tasks in the domain of automated vehicles, including car-following, trajectory optimization, and obstacle avoidance. 
The literature on this topic is extensive, but we will narrow our focus to advancements that employ comparable formulations and solution techniques to those introduced in this paper. 
Wang et al. \cite{wang2021leading} studied the controllability and stabilizability of a mixed-autonomy platoon using a single AV. 
They use the optimal velocity model (OVM) for both human-driven traffic and the AV's car following behavior. 
The AV has an additive acceleration that is being controlled using a state feedback controller. The controller design relies on the linearization of the system dynamics. 
Similar techniques were proposed for circular platoons in \cite{wang2020controllability, zheng2020smoothing}.

Wang et al. \cite{wang_2022_optimal} formulated the problem of smoothing single-lane mixed-autonomy traffic flow through optimal control of AVs. 
The dynamics combine the intelligent driver model (IDM) for human driven vehicles with a modified OVM with positive additive acceleration for the AVs. 
In contrast, our work uses the Bando-FtL model which enjoys well-posedness of the model independent on the vehicles behavior ahead.
Without these properties, analyzing the related optimal control problems becomes intricate or even impossible as existence of solutions might depend on the choice of acceleration profiles.
For the AVs, we employ a more representative control parametrization compared to using CFMs with additive acceleration as suggested in \cite{wang2021leading, wang_2022_optimal}. 
While the latter approach relies on the car-following dynamics for collision avoidance --- which is not rigorously demonstrated --- it comes at the expense of restricting the AVs' behavior.
Further, with the objective function proposed in \cite{wang_2022_optimal}, which only penalizes the perturbations of the AVs' speed, this modeling choice can lead to unrealistic vehicle dynamics. 
This can occur when the additive acceleration results in an arbitrarily small headway requiring an unrealistically high deceleration by the CFM to avoid collision. 
In our approach, we explicitly handle the safety constraints leading to natural bounds on the vehicle's acceleration independently of the objective functional. 
Given these modeling choices, we employ a solution approach similar to \cite{wang_2022_optimal}, leveraging gradient descent and the adjoint representation of the gradient.
%
%Finally, our experiments include a comparison of two choices of the objective functional: one optimizing the overall platoon performance and another considering the AVs performance only as in \cite{wang_2022_optimal}. 

%A related line of research considers a coupled PDE-ODE traffic modeling technique in which the density of the human-driven traffic is modeled on the macroscopic scale while the microscopic AVs' dynamics are considered \cite{liard2020optimal} and \cite{piacentini2018traffic}. 
%
%In this setting the AV is considered as a moving bottleneck and an optimal control framework is used to find the optimal trajectory of the AV to improve the entire traffic flow. 

A related line of research applies model predictive control (MPC) to car-following tasks \cite{lin2017simplified, wu2022hierarchical, schmied2015nonlinear}. 
In these controllers, the planning layer is often cast as a receding horizon optimal control problem (OCP) with a single AV. 
However, these formulations do not model human traffic and only respond to a leader trajectory. 
This enables the formulation of the OCP as a linearly constrained quadratic program which can be solved efficiently making it well-suited for real-time applications. 

For general results on optimal control of ODEs and extensions to weaker forms of the related Pontryagin's maximum principle, we refer the reader to \cite{Mangasarian1966,Maurer1981,Sussmann1997,Sussmann,angrisani2025}.

% \textbf{Optimal control with path constraints}. 

\subsection{Outline} In \cref{sec:setup} we state the notation and main definitions and introduce the problem. 
In \cref{sec:traffic_modeling}, we introduce the traffic model of choice. 
In \cref{sec:analysis}, we discuss the existence of solution to the proposed optimal control problem with the specified traffic model.
In \cref{sec:method} we introduce a gradient descent algorithm based on the adjoint formulation to solve the proposed optimal control problem. 
In \cref{sec:experiemnts}, we present the numerical results on different scenarios. 

\section{Problem Setup} 
\label{sec:setup}
We consider a system of mixed-autonomy platoon of vehicles driving on a single lane. 
Let \(N\in\N_{\geq1}\) and \(M\in\N_{\geq1}\) be the number of human-driven and autonomous vehicles, respectively. 
Additionally, let the sets
\[
\Ia\subset \{1,\ldots,N+M\}:\ |\Ia|=M,\ \Ih\:\{1,\ldots,N+M\}\setminus \Ia
\]
denote the indices of \textbf{autonomous} and \textbf{human-driven} vehicles, respectively. We set 
\[
I\:\Ia\cup\Ih=\{1,\ldots,N+M\}.
\]
Given a finite time horizon \(T \in \R_{>0}\), we define, for a given time \(t\in[0,T]\), the vectors
\[\bx(t) \in \R^{|I|},\ \bv(t) \in \R^{|I|}\]
where \(\bx(t)\) are the positions of the vehicles and \(\bv(t)\) are their velocities.
To simplify the notations, we define the time-dependent control vector for all vehicles as
\[\bu(t) \in \R^{|I|},\ t \in [0,T]\]
which represents the acceleration of the autonomous vehicles (AVs), and take zero value for human-driven vehicles (HVs) as they are not controlled.
Lastly, all positions and velocities have an initial state at time \(t=0\)
\begin{align*}
    \bx_{\circ} \in \R^{|I|},\ & \bv_{\circ} \in \R_{\geq0}^{|I|}
\end{align*}

Having introduced the main notation, we proceed with some additional modeling assumptions.
\begin{assumption}[Given leader's trajectory] 
\label{ass:leader_trajectory}
We assume that the leading vehicle's trajectory is given and satisfies 
\begin{equation}
\xl\in W^{2,\infty}((0,T)): \ \dot{x}_{\ell}(t)\geq 0\ \quad \forall t\in[0,T]
\label{eq:leader_trajectory}
\end{equation}
\end{assumption}
This ensures that the leader's trajectory is given such that the lead vehicle's velocity is always non-negative and smooth enough so that it also admits an expression for the acceleration \(\ddot{x}_{\ell}\).
%
% To simplify the notation in future discussions, we index the lead vehicle by \(i = 0\) and denote the leader's position and velocity by
% \begin{align*}
%     \bx_0(t) & \: \xl(t)\\
%     \bv_0(t) & \: \dot{\xl}(t). 
% \end{align*}

\begin{assumption}[Minimal initial distance between cars]
\label{ass:min_dist}
For a given vehicle length \(l \in \R_{>0}\), we assume that there exists \(d_{\circ}\in\R_{>0}\) such that 
\begin{equation}
    \bx_{\circ,i}-\bx_{\circ,i+1}-l > d_{\circ} \ \forall i\in\{1,\ldots,N+M-1\} \wedge \xl(0)-\bx_{\circ,1}-l > d_{\circ}
\label{eq:min_dist}
\end{equation}
\end{assumption}
This assumption ensures an initial space headway between all vehicles is above a some distance \(d_{\circ}>0\). 

\begin{assumption}[Reasonable control set]
\label{ass:control_set}
For a given \(a_{\min}\in\R_{<0}\) and \(a_{\max}\in\R_{>0}\) the controls vector satisfies \(\bu \in \bU_{a_{\min}}^{a_{\max}}\) with
\begin{equation}
\begin{aligned}
\bU_{a_{\min}}^{a_{\max}}\: &  \Big\{\bu\in L^{\infty}\big((0,T);\R^{|I|}\big):\bu_{i}\equiv 0\ \forall i\in\Ih \\
& a_{\min}\leq \bu_{i}(t)\leq a_{\max} \ \wedge\ \bv_{\circ,i}+\int_{0}^{t}\!\!\!\bu_{i}(s)\dd s\geq 0\ \forall (t,i)\in[0,T]\times \Ia\Big\}.
\end{aligned}
\label{eq:control_set}
\end{equation}
\end{assumption}
That is, we control the AVs' acceleration such that it lie between \(a_{\text{min}}<0\) and \(a_{\text{max}}>0\) and all AVs drive at a non-negative velocity. 
The HVs' acceleration is not controlled as expressed in \cref{ass:control_set} with the condition \(\bu_{i}\equiv 0\ \forall i\in\Ih\).

We now define the system of ordinary differential equations (ODEs) that describes the evolution of positions and velocities of all vehicles, given that the controls \(\bu\) are applied. 
\begin{definition}[System dynamics]
\label{def:dynamics}
Given \cref{ass:leader_trajectory,,ass:min_dist,,ass:control_set}, and given the acceleration function 
\[
\Acc \colon \big(\mathbb{R}^2 \times \mathbb{R}_{\geq 0}^{2}\big) \mapsto \mathbb{R},\]
we consider for \(t\in[0,T]\) the following system of initial value problems
\begin{equation}
\begin{aligned}
\bx_{0}(t)&=\xl(t)\\
\bv_{0}(t)&=\vl(t)\\
   \dot{\bx}_{i}(t) &=\bv_{i}(t),& i \in I\\
   \dot{\bv}_{i}(t)&=\Acc\big(\bx_{i}(t), \bx_{i-1}(t), \bv_{i}(t),\bv_{i-1}(t)\big) &i\in \Ih \\
   \dot{\bv}_{i}(t)&= \bu_i(t) &i\in \Ia\\
   \bx(0)&=\bx_{\circ}\\
   \bv(0)&=\bv_{\circ}
\end{aligned}
\label{eq:dynamics}
\end{equation}
\end{definition}

\begin{remark}[The position-velocity vector] 
The vectors \(\bx(t)\) and \(\bv(t)\) give the positions and velocities of  \(M+N \) vehicles at time $t \in [0, T]$. 
For convenience of notation, and considering that the leading vehicle is positioned in front of the first vehicle $i = 1$ we use, according to \cref{ass:leader_trajectory}, the notation $\bx_0(t) = \xl(t)$ and $\bv_0(t) = \vl(t)$. 
%
% Further, one should have \((\bx(0), \bv(0))=((\bx_{\circ,i}, \bv_{\circ,i}), i \in I) \in \mathbb{R}^{|I|} \times \mathbb{R}^{|I|}\). 
% %
% But to be convenient, we assign \(\bx(0) =\bx_{\circ}=((x_{\circ,\ell}, \bx_{i,\circ}), i \in I )\in \mathbb{R}^{N+m+1}\) to indicate the initial positions of all vehicles including \(M\) autonomous vehicles, \(N\) human-driven vehicles and the leading vehicle. Similarly, we assign \(\bv(0)=\bv_{\circ}\). 
% %
% We also assume that the leader of the first vehicle \(i=1\) is the leading vehicle, i.e., \((\bx_0, \bv_0) = (\xl, \vl)\).  
\end{remark}

\begin{remark}[The car-following dynamics]
We point out that the controls acting on AV \(i \in \Ia\) can only affect the positions and velocities of their following vehicles. 
That is, the position and velocity of vehicle $i$, \(\bx_i\) and \(\bv_i\), only depend on the controls \(\bu_{k}\), with $k \leq i$ as the system of ODEs is only \textit{one-sided} coupled.
\end{remark}

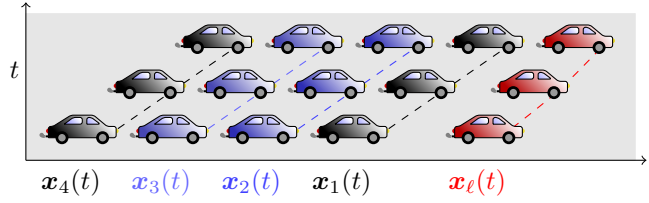
\begin{figure}
    \centering
\begin{tikzpicture}[scale=0.6]
 \draw[fill,color=gray!20!white](-6.5,-1)--(-6.5,2.25)--(7,2.25)--(7,-1)--cycle;
 \draw[->] (-6.5,-1) -- (-6.5,2.5);
\draw (-6.75,1) node {\(t\)};
\draw[->] (-6.5,-1) -- (7.25,-1);
\draw[dashed,black] (1.25,-0.5) -- (4.5,1.75);
\draw[dashed,blue!75!white] (-.75,-0.5) -- (2.5,1.75);
\draw[dashed,blue!60!white] (-2.75,-0.5) -- (.5,1.75);
\draw[dashed,black] (-4.75,-0.5) -- (-1.5,1.75);
\draw [dashed,red] plot [smooth] coordinates { (4,-0.5) (5.25,0.5) (6.25,1.5)};
\draw (3.5,-1) node[below] {\textcolor{red}{\(\bx_{\ell}(t)\)}};
\draw (0.5,-1) node[below] {\textcolor{black}{\(\bx_{1}(t)\)}};
\draw (-1.5,-1) node[below] {\textcolor{blue!75!white}{\(\bx_{2}(t)\)}};
\draw (-3.5,-1) node[below] {\textcolor{blue!60!white}{\(\bx_{3}(t)\)}};
\draw (-5.5,-1) node[below] {\textcolor{black}{\(\bx_{4}(t)\)}};
\begin{scope}[shift={(1.5,0)}]\car{black}\end{scope}
 \begin{scope}[shift={(5,1)}]\car{red}\end{scope}
 \begin{scope}[shift={(3,1)}]\car{black}\end{scope}
 \begin{scope}[shift={(4,0)}]\car{red}\end{scope}
 \begin{scope}[shift={(0,-1)}]\car{black}\end{scope}
 \begin{scope}[shift={(3,-1)}]\car{red}\end{scope}
 \begin{scope}[shift={(-2,-1)}]\car{blue!75!white}\end{scope}
 \begin{scope}[shift={(-0.5,0)}]\car{blue!75!white}\end{scope}
 \begin{scope}[shift={(1,1)}]\car{blue!75!white}\end{scope}
  \begin{scope}[shift={(-4,-1)}]\car{blue!60!white}\end{scope}
 \begin{scope}[shift={(-2.5,0)}]\car{blue!60!white}\end{scope}
 \begin{scope}[shift={(-1,1)}]\car{blue!60!white}\end{scope}
 \begin{scope}[shift={(-6,-1)}]\car{black}\end{scope}
 \begin{scope}[shift={(-4.5,0)}]\car{black}\end{scope}
 \begin{scope}[shift={(-3,1)}]\car{black}\end{scope}
 \end{tikzpicture}
\caption{The leader \textcolor{red}{\(\bx_{\ell}(t)\equiv \bx_{0}(t)\)} with its dynamics determined by the acceleration \(u_{\text{lead}}(t)\) and the following cars \textcolor{black}{\(\bx_{1}(t)\)}, \textcolor{blue!75!white}{\(\bx_{2}(t)\)},  \textcolor{blue!60!white}{\(\bx_{3}(t)\)}, \textcolor{black}{\(\bx_{4}(t)\)} with its dynamics governed system \cref{eq:dynamics}. Here, cars \textcolor{black}{\(\bx_{1}(t)\)} and \textcolor{black}{\(\bx_{4}(t)\)} are autonomous and cars \textcolor{blue!75!white}{\(\bx_{2}(t)\)} and \textcolor{blue!60!white}{\(\bx_{3}(t)\)} are human-driven. }
    \label{fig:car_following_multi}
\end{figure}

Let us for now assume that for \(\bu\in \bU_{a_{\min}}^{a_{\max}}\) the solution of \cref{eq:dynamics} is uniquely defined on the time horizon \(T\in\R_{>0}\). Then, we can write the solution as
\begin{equation}
(\bx[\bu],\bv[\bu])\in W^{2,\infty}\big((0,T);\R^{|I|}\big)\times W^{1,\infty}\big((0,T);\R^{|I|}\big),
\label{eq:bx[bu]}
\end{equation}
indicating the functional dependency of the solution \((\bx,\bv)\) with regard to the control \(\bu\). 
The fact that there is such a functional dependency is not obvious and needs to be shown (we refer to \cref{sec:traffic_modeling} for the respective discussion).

Having stated the system dynamics and introduced the notation for its solution, we now need to restrict the control set further so that the AVs do not collide with their preceding vehicles, a condition not yet guaranteed by the control set in \cref{ass:control_set}. 
To see this, consider the control choice \(\bu\in \bU^{a_{\max}}_{a_{\min}}\) with maximal acceleration, which would result in a linearly increasing velocity for the AVs over time. For time \(T\) large enough, an AV following an HV will inevitably collide with this proceeding HV if its velocity remains bounded.
Indeed, the admissible control set is defined to incorporate a safety distance between the AVs and their preceding vehicles:
\begin{definition}[Admissible control set incorporating a safety distance]
\label{def:U}
With the notation in \cref{eq:bx[bu]} in mind, we define the set of admissible controls \(\bU\) as 
\begin{equation}
\bU \: \Big\{\bu\in\bU_{a_{\min}}^{a_{\max}}:\ \bx_{i-1}[\bu](t)-\bx_{i}[\bu](t)-l\geq d_{\text{safe}}\ \forall(t, i) \in[0, T] \times \Ia\Big\}
\label{eq:U}
\end{equation}
for a given vehicle length \(l \in \R_{>0},\) a saftey distance \(d_{\text{safe}} \leq d_{\circ} \in\R_{>0}\) where \(d_{\circ}\) is as defined in \cref{ass:min_dist}, and the control set \(\bU_{a_{\min}}^{a_{\max}}\) as defined in  \cref{ass:control_set}.
\end{definition}

\begin{remark}[The admissible control set]
Note that the defined set \(\bU\) could be empty if \(a_{\min}\in\R_{<0}\) is not small enough. 
For example, at the initial time, if the vehicle ahead is standing still and the following AV is approaching at a high velocity, the minimal deceleration might not be enough to avoid collision.
However, if we assume that \(a_{\min}\) is significantly small, the set can be proven to be non trivial as will be detailed in \cref{cor:well-posedness_system}.
\end{remark}

\begin{remark}[Safety distance for human-driven vehicles ]
Note that the defined set \(\bU\) does not encode a safety distance between the HVs and their preceding vehicles. 
This safety distance should be guaranteed by the selected car following model. 
We will discuss this in more details in \cref{sec:traffic_modeling} and give lower bounds on the headway of HVs (see \cref{th:well_posedness_Bando}) for the specific car-following model of choice (the Bando-Follow the leader model). 
\end{remark}

Given the above definitions and modeling assumptions, we can formally define the optimal control problem (OCP) of interest. 
The goal is to find an optimal control vector $\bu^*$ minimizing a certain objective function $J(\bx, \bv, \bu)$. 
For now, we abstract the objective function and introduce the general optimal control problem in \cref{def:ocp}. 
\begin{definition}[Objective function and the related optimal control problem]
\label{def:ocp}
Let \(L\in C^{1}\big(\R^{|I|}\times \R^{|I|}\times\R^{|I|};\R_{\geq0}\big)\) be the \textbf{running cost} and assume that \(L(a,b,\cdot)\) is convex for all \((a,b)\in\R^{2|I|}\). 
Let \(S\in C^{1}\big(\R^{|I|}\times\R^{|I|};\R_{\geq0}\big)\) be the \textbf{final cost}. 
We call
\begin{align}
\tag{OCP}
\label{eq:ocp}
    \inf_{\bu\in\bU}\!\! J(\bx[\bu],\bv[\bu],\bu)&\:\int_0^T\!\!\! L\big(\bx[\bu](t),\bv[\bu](t),\bu(t)\big) \dd t + S\big(\bx[\bu](T),\bv[\bu](T)\big)\\
    \text{with }& (\bx[\bu],\bv[\bu])\text{ as in \cref{eq:dynamics} and \(\bU\) as in \cref{def:U}}\notag
\end{align}
the \textbf{optimal control problem (OCP)}  considered.
\end{definition}

\begin{remark}[The optimal control problem and the assumptions on \(L \text{ and } S\)]
The assumptions on the proposed objective functions are rather simplistic. 
The convexity assumption in the last argument of \(L\) ensures the existence of a minimizer (as will be discussed in \cref{th:existence_minimizer}). 
This assumption is not restrictive, as we will show in \cref{sec:experiemnts}, where we instantiate meaningful optimal control problems that satisfy this assumption by using the weighted \(L^{2}\)-norm of the control vector.
Additionally, we assume that both \(L\) and \(S\) are continuously differentiable. This assumption is useful for deriving the adjoint system and the first-order optimality condition for the problem. 
%so that in particular the objective function is required to be differentiable in the proper topology.
%
%However, for this, the assumption on \(L\) and \(S\) are natural, although not to the full extent required for the existence of minimizers.
\end{remark}

\section{Traffic Modeling}
\label{sec:traffic_modeling}
So far, we have not specified the car-following model we are using, i.e., the \(\Acc\) function in \cref{def:dynamics}. 
This is what we will introduce in the following. 
We will aim for the well-known Bando-Follow-the-Leader model (Bando-FtL) \cite{DelleMonache2019}, however, one should emphasize that any second order car-following model would be suitable as long as it has properties like being collision free (i.e.~well-posed) for all admissible leader trajectories. 
This cannot be assured for all second order car-following models which currently exist and are commonly used. 
For instance, refer to the problems with the IDM (intelligent driver model) in \cite{albeaik_2022_limitations}.
For this reason, we restrict ourselves for now to the named Bando-FtL model as its properties are well-understood (see \cite{gong2022well}).
In this section, we define the Bando-FtL model and give some results on its well-posedness as well as lower bounds on the velocity and the headway. 
These bounds will be crucial when we later prove the well-posedness of our optimal control problem.
\begin{definition}[Bando-Follow-the-Leader model]
\label{defi:bando_ftl}
For a given vehicle length \(l\in\R_{>0},\) define the set 
\[
A\:\big\{(x,\xl,v,\vl)\in\R^{4}:\ \xl-x-l>0\big\}.
\]
With parameters \((\alpha,\beta)\in\R_{>0}^{2}\), and a function 
\begin{equation}
    V\in C^{1}(\R_{>0};\R_{\geq0})\cap L^{\infty}(\R_{\geq0})\ \text{ satisfying } V'\geq 0, 
    \label{eq:assumption_V}
\end{equation} 
the Bando-follow-the-leader model is instantiated by the definition of the acceleration
\begin{align*}
\Acc :
\begin{cases}
A & \mapsto\R\\
(x,\xl,v,\vl) & \mapsto  \alpha \Big(V(\xl-x-l)-v\Big) + \beta\frac{\vl-v}{(\xl-x-l)^{2}}.
\end{cases}
\end{align*}
\end{definition}
\begin{remark}[The meaning of the involved constants and parameters]
In \cref{defi:bando_ftl} of the Bando-FtL model, the parameter \(l\) represents the length of the vehicle (this is the same parameter used in \cref{ass:min_dist} and  \cref{def:U}). 
\(\alpha\) and \(\beta\) are sensitivity parameters and \(V\) is the optimal velocity function. 
In general, the optimal velocity function \(V\) is monotonically increasing with respect to the space headway, i.e., \(V'\geq 0\). 
A common choice of this optimal velocity function, which we will be using throughout this work, is given as
\begin{equation}
    V(h) = v_{\max} \tfrac{\tanh(h-d_{\text{s}})+\tanh(l + d_{\text{s}})}{1+\tanh{(l + d_{\text{s}})}}, \ h\in\R_{\geq 0},
    \label{OV_fun}
\end{equation}
where \(v_{\max}\in\R_{>0}\) is the maximum allowable velocity of the vehicle, and $d_{\text{s}}\in\R_{>0}$ is a safety distance parameter.
% \todo[inline]{What is a "legal" velocity function. I am not used to this term. ARWA: I agree it's weird, changed it to "optimal" velocity function.}
\end{remark}

Having stated the dynamics for the Bando-FtL model, we now give its well-posed in the scenario where there is a leader satisfying \cref{ass:leader_trajectory} and the follower's acceleration, and thus velocity and position, are determined by the model. 
This is enough for the well-posedness of the entire system of car following models in \cref{def:dynamics} as the dependencies of the different HVs is only in one direction. That is, the acceleration of vehicle \(i+1\in I_{h}\) depends on vehicle \(i\) and itself only (as we model the HVs by the car-following models). This dependency is not true for the AVs as their acceleration is a result of the optimization and as such can depend on all states in the system.
\begin{theorem}[Well-posedness of the Bando-FtL and some bounds]
\label{th:well_posedness_Bando}
Recall \cref{ass:leader_trajectory,,ass:min_dist,,ass:control_set} and let \(\Ia=\emptyset\) and \(\Ih=\{1\}\). Then, the system dynamics in \cref{def:dynamics} admits a unique solution \(\bx_{1}\in W^{2,\infty}((0,T))\). For the constants
\begin{align}
A&\:-\bv_{0}-\alpha T\|V\|_{L^{\infty}(\R_{>0})}+\alpha (\xl(0)-\x_{\circ,1}-l)-\beta\tfrac{1}{\xl(0)-\bx_{\circ,1}-l}\\
d_{\min}&\:\tfrac{A+\sqrt{A^{2}+4\alpha\beta}}{2\alpha}\label{defi:d_min}\\
B&\:\tfrac{\alpha d_{\min}^{2}+\beta}{d_{\min}^{2}}\label{defi:B}
\end{align}
the solution satisfies the following for all \( t\in[0,T]\): 
Lower bound on the headway: 
    \begin{equation}
    \xl(t)-\bx_1(t)-l\geq d_{\min},\label{eq:d_min}
    \end{equation}
Lower and upper bounds on the velocity: 
    \begin{equation}
    \bv_{\circ, 1}\exp\big(-Bt\big) \leq \dot{\bx_1}(t)\leq \max\Big\{\bv_{\circ, 1},\|V\|_{L^{\infty}(\R)}+\tfrac{\beta}{\alpha}\tfrac{\vl(t)}{d_{\min}^{2}}\Big\},\label{eq:lower_upper_bound_velocity}
    \end{equation}
Lower and upper bounds on the acceleration: 
    \begin{small}
    \begin{align}
       -B\max\Big\{\bv_{\circ,1},\|V\|_{L^{\infty}(\R)}+\tfrac{\beta}{\alpha}\tfrac{\vl(t)}{d_{\min}^{2}}\Big\} \leq \ddot{\bx}(t)\leq \alpha\|V\|_{L^{\infty}(\R_{\geq 0})}-\alpha \bv_{\circ,1}\e^{-Bt}+\beta\tfrac{\vl(t)}{d_{\min}^{2}}.\label{eq:bounds_acceleration}
    \end{align}
    \end{small}
\end{theorem}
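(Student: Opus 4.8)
The plan is to establish the three displayed bounds in the order (headway) $\Rightarrow$ (velocity) $\Rightarrow$ (acceleration), deriving each from its predecessor, and to handle existence/uniqueness by a continuation argument that runs in parallel with the headway bound. First I would set $h(t) \: \xl(t) - \bx_1(t) - l$ and note that on any interval where the solution exists and $h>0$, the Bando-FtL right-hand side is $C^1$, so Picard–Lindelöf gives a unique local solution; the only way the maximal interval of existence can fail to reach $T$ is if $h \to 0$ (blow-up of $1/h^2$) or if $\bv_1$ blows up. So the real content is an a priori lower bound on $h$ and an a priori upper bound on $\bv_1$ on the maximal interval, after which continuation closes the argument. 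This is exactly the structure in \cite{gong2022well}, specialized to a single follower.

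For the headway bound \eqref{eq:d_min}: I would compute $\dot h = \vl - \bv_1$ and $\ddot h = \ddot{x}_\ell - \Acc = \ddot{x}_\ell - \alpha(V(h) - \bv_1) - \beta \tfrac{\vl - \bv_1}{h^2}$. The trick is to look for a differential inequality that is active precisely when $h$ is small. Observe that $\ddot h = \ddot{x}_\ell - \alpha V(h) + \alpha \bv_1 - \beta \tfrac{\dot h}{h^2}$, and $-\beta \dot h / h^2 = \tfrac{d}{dt}\big(\beta/h\big)$. Hence $\tfrac{d}{dt}\big(\dot h - \beta/h\big) = \ddot{x}_\ell - \alpha V(h) + \alpha \bv_1 \geq -\alpha \|V\|_{L^\infty} + \alpha \bv_1 \geq -\alpha\|V\|_{L^\infty}$ using $\ddot x_\ell$'s lower bound is irrelevant — actually we only need $\ddot{x}_\ell \geq$ something, so I would instead bound $\bv_1 \geq 0$ (which holds initially and is part of what we track) and drop $\ddot{x}_\ell$ against... let me be careful: we want a lower bound on $h$, so we want a lower bound on $\dot h - \beta/h$. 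We have $\tfrac{d}{dt}(\dot h + \beta/h) = \ddot{x}_\ell - \alpha V(h) + \alpha \bv_1 + 2\beta \dot h/h^2 \cdot(\text{sign issues})$ — I would instead integrate the clean identity $\tfrac{d}{dt}\big(\dot h - \beta/h\big) = \ddot{x}_\ell - \alpha V(h) + \alpha \bv_1$ from $0$ to $t$, bound the right side below by $-\alpha\|V\|_{L^\infty}$ (using $\ddot{x}_\ell \geq 0$? no — using only $\bv_1 \geq 0$ and $\ddot{x}_\ell$ bounded below, but $W^{2,\infty}$ gives $|\ddot{x}_\ell|$ bounded; here the paper absorbs it into the constant $A$ — wait, $A$ in \eqref{defi:d_min} contains no $\ddot{x}_\ell$ term, so in fact they must use $\dot{x}_\ell \geq 0$, i.e. $\vl \geq 0$, together with the identity $\dot h = \vl - \bv_1$, writing $\alpha \bv_1 = \alpha \vl - \alpha \dot h$, so $\tfrac{d}{dt}(\dot h - \beta/h) + \alpha \dot h = \ddot{x}_\ell + \alpha \vl - \alpha V(h) \geq -\alpha\|V\|_{L^\infty}$ after dropping $\ddot{x}_\ell + \alpha \vl \geq 0$). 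Then $\tfrac{d}{dt}\big(\dot h - \beta/h + \alpha h\big) \geq -\alpha\|V\|_{L^\infty}$, so $\dot h - \beta/h + \alpha h \geq \dot h(0) - \beta/h(0) + \alpha h(0) - \alpha t \|V\|_{L^\infty} = -A$ (matching the definition of $A$ with $\dot h(0) = \vl(0) - \bv_{0,1} = \bv_0 - \bv_{\circ,1}$; note the paper's $A$ uses $-\bv_0$, so there's a sign convention I would reconcile, likely $\bv_0$ there denotes $\bv_{\circ,1}$). At a hypothetical first time $t_\ast$ where $h$ attains a new minimum below $d_\circ$ we have $\dot h(t_\ast) \leq 0$, hence $-\beta/h + \alpha h \geq -A$, i.e. $\alpha h^2 + A h - \beta \geq 0$, forcing $h(t_\ast) \geq \tfrac{-A + \sqrt{A^2 + 4\alpha\beta}}{2\alpha}$. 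Reconciling the sign with \eqref{defi:d_min} (which has $+A$) just means their $A$ is my $-A$; either way $d_{\min} > 0$ and $h \geq d_{\min}$ on all of $[0,T]$. This also rules out $h \to 0$, so the solution extends to $[0,T]$.

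For the velocity bounds \eqref{eq:lower_upper_bound_velocity}: with $h \geq d_{\min}$ in hand, $\dot{\bv}_1 = \alpha(V(h) - \bv_1) + \beta\tfrac{\vl - \bv_1}{h^2} = -\big(\alpha + \beta/h^2\big)\bv_1 + \alpha V(h) + \beta \vl/h^2$. The coefficient of $\bv_1$ satisfies $\alpha + \beta/h^2 \leq \alpha + \beta/d_{\min}^2 = B$ and is $\geq \alpha > 0$. For the lower bound, drop the nonnegative forcing term $\alpha V(h) + \beta\vl/h^2 \geq 0$ to get $\dot{\bv}_1 \geq -B \bv_1$, hence $\bv_1(t) \geq \bv_{\circ,1} e^{-Bt}$ by Grönwall. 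For the upper bound, a standard barrier/maximum-principle argument: let $M \: \max\{\bv_{\circ,1}, \|V\|_{L^\infty} + \tfrac{\beta}{\alpha}\tfrac{\vl(t)}{d_{\min}^2}\}$ (with the caveat that if $\vl$ is time-varying one takes the sup, or argues pointwise as in the paper's statement); at any point where $\bv_1 = M \geq \|V\|_{L^\infty} + \tfrac{\beta\vl}{\alpha d_{\min}^2}$ one checks $\dot{\bv}_1 \leq \alpha\|V\|_{L^\infty} + \beta\vl/h^2 - (\alpha + \beta/h^2)M \leq 0$, so $\bv_1$ cannot exceed $M$. This also confirms no velocity blow-up, completing the continuation argument for global existence and uniqueness in $W^{2,\infty}$.

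For the acceleration bounds \eqref{eq:bounds_acceleration}: these follow by substituting the just-proved velocity and headway bounds directly into $\ddot{\bx}_1 = \alpha(V(h) - \bv_1) + \beta\tfrac{\vl - \bv_1}{h^2}$. For the upper bound: $V(h) \leq \|V\|_{L^\infty}$, $-\alpha \bv_1 \leq -\alpha \bv_{\circ,1} e^{-Bt}$, $\beta(\vl - \bv_1)/h^2 \leq \beta \vl/d_{\min}^2$ (dropping $-\bv_1 \leq 0$ and using $h \geq d_{\min}$). For the lower bound: the cleanest route is $\ddot{\bx}_1 = \dot{\bv}_1 \geq -B\bv_1 \geq -B M$ from the velocity ODE rearrangement above, which gives exactly the stated left-hand side. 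I would present the lower bound this way rather than term-by-term, since the $-(\alpha + \beta/h^2)\bv_1 \geq -B\bv_1$ collapse is what produces the clean constant.

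\textbf{Main obstacle.} The genuinely delicate step is the headway lower bound: finding the right Lyapunov-type combination $\dot h - \beta/h + \alpha h$ so that its derivative is bounded below using \emph{only} $\vl \geq 0$ (Assumption \ref{ass:leader_trajectory}) and $V \in L^\infty$, without needing a lower bound on $\ddot{x}_\ell$ — this is what makes the bound robust to arbitrary admissible leader trajectories, and it is the crux of why Bando-FtL is "well-posed for all admissible leaders" in the sense emphasized in the introduction. Everything downstream (velocity and acceleration bounds, global existence via continuation) is then a routine Grönwall/barrier argument. A secondary bookkeeping point is reconciling the sign of $A$ and the precise meaning of $\bv_0$ versus $\bv_{\circ,1}$ in the statement, and handling the time-dependent $\vl(t)$ inside the $\max$ (either by passing to $\sup_{[0,T]}\vl$ or by keeping the pointwise-in-$t$ reading the theorem seems to intend).
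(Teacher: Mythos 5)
The paper does not actually prove this theorem — it defers entirely to the citation \cite{gong2022well} — and your reconstruction follows exactly the structure of that reference: the Lyapunov-type quantity $\dot h - \beta/h + \alpha h$ for the headway bound, then Gr\"onwall/barrier arguments for the velocity, then direct substitution for the acceleration, with global existence by continuation. The route and all three bounds check out, so this is essentially the intended argument.

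One step in your sketch needs repair, though it does not change the outcome: the pointwise inequality $\ddot{x}_{\ell} + \alpha \vl \geq 0$ is false in general (the leader may brake arbitrarily hard within $W^{2,\infty}$). The correct move is to integrate the identity $\tfrac{d}{dt}\big(\dot h - \beta/h + \alpha h\big) = \ddot{x}_{\ell} + \alpha \vl - \alpha V(h)$ exactly: $\int_0^t \ddot{x}_{\ell}\,ds = \vl(t) - \vl(0) \geq -\vl(0)$ and $\alpha\int_0^t \vl \geq 0$, so the accumulated right-hand side is bounded below by $-\vl(0) - \alpha t \|V\|_{L^{\infty}}$. The $-\vl(0)$ then cancels against the $+\vl(0)$ hidden in $\dot h(0) = \vl(0) - \bv_{\circ,1}$, which is precisely why the constant $A$ in \cref{defi:d_min} contains $-\bv_{\circ,1}$ but no $\vl(0)$ and no trace of $\ddot{x}_{\ell}$; as written, your lower bound retains a spurious $+\vl(0)$. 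With that correction the quadratic inequality at an interior minimum of $h$ gives $h \geq \tfrac{A+\sqrt{A^2+4\alpha\beta}}{2\alpha}$ with the paper's sign convention, and the rest of your argument (nonnegativity of $\bv_1$ to close the small circularity in the Gr\"onwall step, the barrier argument for the upper velocity bound, and the $-B\bv_1$ collapse for the acceleration lower bound) goes through as you describe.
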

\begin{proof}
The proof can be found in \cite{gong2022well}.
\end{proof}

Next, we show that for small enough \(a_{\min}\), the maximal deceleration, we obtain that the admissible control set is not empty.
\begin{corollary}[\(\bU \neq \emptyset\) and well-posedness of the system dynamics]
\label{cor:well-posedness_system}
For \(\bU\) as in \cref{def:U} with \(Acc\) as in \cref{defi:bando_ftl} and for sufficiently small \(a_{\min}\in\R_{<0}\) the set of admissible controls is non empty, i.e.,
\[
\exists a_{\min}\in\R_{<0}:\ \bU\neq\emptyset.
\]
Additionally, for such an \(a_{\min}\in\R_{<0}\) and \(\bu\in \bU\) there exists a unique solution of the system dynamics as stated in \cref{def:dynamics}.
\end{corollary}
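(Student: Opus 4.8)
\emph{Proof idea.} I would establish the two assertions separately, handling unique solvability of the dynamics first, since that part in fact holds for \emph{every} $\bu\in\bU_{a_{\min}}^{a_{\max}}$ and every $a_{\min}\in\R_{<0}$; non-emptiness of $\bU$ then follows by exhibiting one concrete admissible control. For the solvability, fix $a_{\min}\in\R_{<0}$, $a_{\max}\in\R_{>0}$ and $\bu\in\bU_{a_{\min}}^{a_{\max}}$, and solve \cref{eq:dynamics} recursively in the vehicle index $i=1,\dots,N+M$, using the one-sided coupling noted after \cref{def:dynamics}: $(\bx_i,\bv_i)$ depends only on $(\bx_{i-1},\bv_{i-1})$ and on $\bu_i$. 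Set $\bx_{\circ,0}\:\xl(0)$; by \cref{ass:leader_trajectory} vehicle $0$ is non-decreasing. If $i\in\Ia$, then $\bv_i(t)=\bv_{\circ,i}+\int_0^t\bu_i(s)\dd s$ and $\bx_i(t)=\bx_{\circ,i}+\int_0^t\bv_i(s)\dd s$ are globally defined, $\bx_i\in W^{2,\infty}((0,T))$ because $\bu_i\in L^\infty$, and $\bx_i$ is non-decreasing because $\bv_i\geq0$ by \cref{ass:control_set}. If $i\in\Ih$, then the already-constructed trajectory of vehicle $i-1$ lies in $W^{2,\infty}((0,T))$, is non-decreasing (by the previous case, or by the velocity lower bound in \cref{eq:lower_upper_bound_velocity} if $i-1\in\Ih$), and starts a distance $\bx_{\circ,i-1}-\bx_{\circ,i}-l>d_{\circ}>0$ ahead (\cref{ass:min_dist}); hence it qualifies as a prescribed leader trajectory in the sense of \cref{ass:leader_trajectory}, and \cref{th:well_posedness_Bando} yields a unique $\bx_i\in W^{2,\infty}((0,T))$ solving the $i$-th equation, with a strictly positive lower bound on the headway $\bx_{i-1}(t)-\bx_i(t)-l$. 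Iterating over $i$ gives the unique $(\bx[\bu],\bv[\bu])$ in the class \cref{eq:bx[bu]}; in particular $\bx_i[\bu]$, $i\in\Ia$, is well defined, so \cref{def:U} is meaningful, and the ``additionally'' clause is proved.

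For $\bU\neq\emptyset$, put $\delta_i\:\bx_{\circ,i-1}-\bx_{\circ,i}-l-d_{\circ}$ for $i\in\Ia$, which is strictly positive by \cref{ass:min_dist}, and choose $a_{\min}\in\R_{<0}$ so small that $\bv_{\circ,i}^{2}/(2|a_{\min}|)\leq\delta_i$ for all $i\in\Ia$. Take the ``brake to a standstill, then coast'' control: for $i\in\Ia$, $\bu_i(t)=a_{\min}$ on $[0,\bv_{\circ,i}/|a_{\min}|)$ and $\bu_i(t)=0$ thereafter; for $i\in\Ih$, $\bu_i\equiv0$. Then $\bu\in L^\infty$, $a_{\min}\leq\bu_i\leq a_{\max}$, and $\bv_{\circ,i}+\int_0^t\bu_i(s)\dd s=\max\{\bv_{\circ,i}-|a_{\min}|t,\,0\}\geq0$, so $\bu\in\bU_{a_{\min}}^{a_{\max}}$ and, by the first part, $\bx[\bu]$ exists. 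The forward displacement of AV $i$ is at most its stopping distance, $\bx_i[\bu](t)-\bx_{\circ,i}=\int_0^t\bv_i(s)\dd s\leq\bv_{\circ,i}^{2}/(2|a_{\min}|)\leq\delta_i$, while every preceding vehicle (including the leader, by \cref{ass:leader_trajectory}) is non-decreasing, so
\[
\bx_{i-1}[\bu](t)-\bx_i[\bu](t)-l\;\geq\;\big(\bx_{\circ,i-1}-\bx_{\circ,i}-l\big)-\delta_i\;=\;d_{\circ}\;\geq\;d_{\text{safe}}\qquad\text{for all }t\in[0,T],\ i\in\Ia.
\]
Hence $\bu\in\bU$ and $\bU\neq\emptyset$; together with the first part this proves the corollary.

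The hard part is the first step, and specifically the realization that \cref{th:well_posedness_Bando} --- proved for a single human follower behind an externally prescribed leader --- can be applied iteratively with the already-solved vehicle $i-1$ serving as the ``leader'' of vehicle $i$. One has to verify that the hypotheses of \cref{ass:leader_trajectory}, namely $W^{2,\infty}$-regularity and $\dot x\geq0$, propagate down the platoon (using \cref{ass:control_set} when the predecessor is an AV and the velocity bound \cref{eq:lower_upper_bound_velocity} when it is an HV), and that \cref{ass:min_dist} provides the positive initial gap the theorem needs. After that, Step~2 is a short kinematic estimate; its only delicate point is the borderline case $d_{\text{safe}}=d_{\circ}$, which is covered because the inequality in \cref{ass:min_dist} is strict, leaving the slack $\delta_i>0$.
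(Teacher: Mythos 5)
Your proposal is correct and follows essentially the same route as the paper: recursion over the one-sided coupled system with \cref{th:well_posedness_Bando} applied iteratively (vehicle $i-1$ serving as the leader of vehicle $i$), plus a brake-to-standstill control whose stopping distance bounds $a_{\min}$ from below in magnitude. Your write-up is in fact somewhat more careful than the paper's in verifying that the hypotheses of \cref{ass:leader_trajectory} propagate down the platoon and in keeping the slack $\delta_i$ explicit, but the underlying argument is identical.
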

\begin{proof}
As the system dynamics are only one-sided coupled, we can perform an induction over \(k\in I\).
For \(k=1\) we have either
\begin{description}
\item[\(1\in\Ia\):] 
    Then, we need to show that there exists a control that prevents vehicle \(1\) from colliding with, or getting too close to, its leading vehicle. Following \cref{def:U} we want to ensure that \(\xl(t)-\bx_{1}[\bu](t)-l\geq d_{\text{safe}} \, \forall t\in[0,T]\). One such control involves applying heavy braking initially to bring the AV to zero velocity, followed by maintaining zero acceleration for the remainder of the time horizon. We show the existence of such controller in the ``worst case scenario'' for the leader trajectory in which the leader vehicle stands still for the entire time horizon. Specifically, we want to show that
    \begin{equation}
    \exists T^{*}\in\R_{>0}:\ \xl(0)-\bx_{1}[\bu](T^{*})-l-d_{\text{safe}}=0\ \wedge\ \dot{\bx}_{1}[\bu](T^{*})=0.\label{eq:minimal_deleceration}
    \end{equation}
    For the proposed control, this translates to
        \[
        \bx_{\circ,1}+\bv_{\circ,1}T^{*}+a_{\min}\tfrac{\big(T^{*}\big)^{2}}{2}=\xl(0)-l-d_{\text{safe}}\ \wedge\ \bv_{\circ,1}+a_{\min}T^{*}=0.
    \]
    Solving this for \((T^{*},a_{\min})\) gives
    \begin{equation}
    a_{\min}\:-\tfrac{\bv_{\circ,1}}{2\big(\xl(0)-\bx_{\circ,1}-l-d_{\text{safe}}\big)},\ T^{*}\:\tfrac{2(\xl(0)-\bx_{\circ,1}-l-d_{\text{safe}})}{\bv_{\circ,1}}.\label{eq:minimal_bound_a_min}
    \end{equation}
    Thus, choosing \(a_{\min}\) as above, the control 
    \[
    \bu_{1}(t)=\begin{cases} a_{\min} & t\in[0,T^{*}]\\
    0 & \text{else}
    \end{cases}
    \]
    satisfies \cref{eq:minimal_deleceration} for all times \(t\in[0,T]\).
\item[\(1\in\Ih\):] 
    Then, there is nothing more to do as by \cref{th:well_posedness_Bando} a solution for \(\bx_{1}\) exists globally and there is no control \(\bu\) applied.
\end{description}
Noting that in both of the previous cases vehicle \(1\) never drives backwards and has a finite deceleration, we can employ the same strategy as in the case \(k=1\) to design a control for the AVs coming afterwards (i.e.,\ \(k\in\Ia, k>1\)). 
Making this uniform across all AVs, we choose
\begin{equation}
a_{\min}=-\max_{k\in \Ia} \tfrac{\bv_{\circ,k}}{2(\bx_{\circ, k-1}-\bx_{\circ, k}-l-d_{\text{safe}})}\label{eq:global_a_min}
\end{equation}
and apply this control to all AVs on the respective time horizons.
This means that we have iteratively constructed a control \(\bu\) satisfying \cref{eq:minimal_deleceration}, which proves the claim. 
The existence and uniqueness of solutions is guaranteed by iteratively applying \cref{th:well_posedness_Bando}. This concludes the proof.
\end{proof}

\begin{remark}[Justification of previously introduced notation]
\label{rem:justification_bx[u]}
Given \cref{cor:well-posedness_system}, we know that by specifying \(\bu\in \bU\) with \(a_{\min}\in\R_{<0}\) sufficiently small, the entire dynamics in \cref{def:dynamics} are fully determined, justifying the notation in \cref{eq:bx[bu]} (which is also used in \cref{def:U}).
\end{remark}

\begin{remark}[The minimal deceleration in \cref{cor:well-posedness_system}]
Note that the \(a_{\min}\) derived in \cref{eq:global_a_min}
might be too conservative in most cases, and it is often possible to choose significantly larger (less negative) values. In numerical tests, this can be identified, and if necessary, the value of \(a_{\min}\) can be adjusted to be smaller. 
\end{remark}

\section{Analysis of the optimal control problem} 
\label{sec:analysis}
Given the problem setup introduced in \cref{sec:setup} and after showing the well-posedness of the system dynamics in \cref{def:dynamics} we show next the existence of a minimizer to the general optimal control problem in \cref{def:ocp}. 
%
% We will next derive the necessary optimality conditions for the considered optimal control problem considered.

\subsection{Existence of a minimizer}
To show the existence of a minimizer of the optimal control problem, we require some stability and continuity results of the solutions with respect to the control. 
Note that the choice of topology is natural as the controls are bounded in \(L^{\infty}\) so that stability of the corresponding solution (second order model) is expected in \(W^{1,\infty}\) by means of compactness.
\begin{lemma}[Continuity of the solution with respect to the control]
\label{lem:stability_solution}
For \(T\in\R_{>0}\) let \((\bu, \tilde{\bu}) \in \bU^2\) with \(\bU \neq \emptyset\) as in \cref{def:U} be given, i.e.\ in particular assume that \(a_{\min}\in\R_{<0}\) as in \cref{ass:control_set} is sufficiently small. 
Then, the following stability estimate holds
\[
\tilde{\bu}\weakstar \bu \text{ in } L^{\infty}\Big((0,T);\R^{|I|}\Big)\ \implies\ \|\bx[\bu]-\bx[\tilde{\bu}]\|_{W^{1,\infty}((0,T);\R^{|I|})}\rightarrow 0
\]
where \(\bx[\bu]\) is defined in \cref{eq:bx[bu]}.
\end{lemma}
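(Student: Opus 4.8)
The plan is to proceed by induction on the vehicle index $i \in I = \{1,\ldots,N+M\}$, exploiting the one-sided coupling of the dynamics: the pair $(\bx_i,\bv_i)$ depends only on $(\bu_1,\ldots,\bu_i)$ and on the already-determined trajectories of vehicles $0,\ldots,i-1$. For the base case and each inductive step there are two sub-cases. If $i \in \Ia$, then $\dot{\bv}_i = \bu_i$ with $\bu_i(0)$-data fixed, so $\bv_i[\bu](t) = \bv_{\circ,i} + \int_0^t \bu_i(s)\dd s$ and $\bx_i[\bu](t) = \bx_{\circ,i} + \int_0^t \bv_i[\bu](s)\dd s$; here the weak-$*$ convergence $\tilde{\bu}_i \weakstar \bu_i$ in $L^\infty((0,T))$ directly gives pointwise convergence of $\bv_i[\tilde{\bu}](t) \to \bv_i[\bu](t)$ for every $t$ (testing against $\mathbb{1}_{[0,t]} \in L^1$), and since $\|\tilde{\bu}_i\|_{L^\infty}\le \max\{|a_{\min}|,a_{\max}\}$ uniformly, the family $\{\bv_i[\tilde{\bu}]\}$ is uniformly Lipschitz, hence equicontinuous; pointwise convergence plus equicontinuity upgrades to uniform convergence $\bv_i[\tilde{\bu}] \to \bv_i[\bu]$ in $C([0,T])$, and then $\bx_i[\tilde{\bu}] \to \bx_i[\bu]$ in $C^1([0,T])$ follows by integrating. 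This gives the $W^{1,\infty}$ convergence of the $i$-th component.

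For the sub-case $i \in \Ih$, the $i$-th velocity solves $\dot{\bv}_i = \Acc(\bx_i,\bx_{i-1},\bv_i,\bv_{i-1})$ with the Bando-FtL right-hand side from \cref{defi:bando_ftl}. By the inductive hypothesis the ``input'' trajectories $(\bx_{i-1}[\tilde{\bu}],\bv_{i-1}[\tilde{\bu}])$ converge to $(\bx_{i-1}[\bu],\bv_{i-1}[\bu])$ in $W^{1,\infty}$. The key is that on the relevant trajectories the headway $\bx_{i-1}-\bx_i-l$ stays bounded below by a positive constant uniformly in $\bu \in \bU$: for $i\in\Ia$ this is the defining constraint $d_{\text{safe}}$ in \cref{def:U}, and for $i\in\Ih$ it is the lower bound $d_{\min}$ from \cref{th:well_posedness_Bando} (applied with leader trajectory $\bx_{i-1}$, whose relevant data — initial velocity, initial headway, $\|\dot{\bx}_{i-1}\|_\infty$ — are themselves uniformly controlled along the induction). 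On the region $\{\xl - x - l \ge \delta\}$ with velocities in a fixed bounded set, $\Acc$ is Lipschitz in all four arguments. Hence writing $e_i := \bv_i[\bu]-\bv_i[\tilde{\bu}]$ and using the Lipschitz bound,
\[
|\dot e_i(t)| \le L\big(|\bx_i[\bu](t)-\bx_i[\tilde{\bu}](t)| + |e_i(t)| + \|\bx_{i-1}[\bu]-\bx_{i-1}[\tilde{\bu}]\|_\infty + \|\bv_{i-1}[\bu]-\bv_{i-1}[\tilde{\bu}]\|_\infty\big),
\]
and combining with $|\bx_i[\bu](t)-\bx_i[\tilde{\bu}](t)| \le \int_0^t |e_i|$, a Grönwall argument closes the estimate: $\|e_i\|_{C([0,T])}$ and $\|\bx_i[\bu]-\bx_i[\tilde{\bu}]\|_{C^1([0,T])}$ are bounded by a constant (depending on $T$, $L$) times the $W^{1,\infty}$-distance of the $(i-1)$-th trajectories, which tends to $0$.

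The main obstacle is the bookkeeping needed to make the uniform-in-$\bu$ lower bound on the headway and the uniform-in-$\bu$ bound on velocities genuinely uniform along the whole induction — one must check that the constants $A$, $d_{\min}$, $B$ in \cref{th:well_posedness_Bando} depend on $\bx_{i-1}$ only through quantities (initial position gap, initial velocity, and a sup-bound on $\dot{\bx}_{i-1}$) that are themselves controlled independently of $\bu$, so that the Lipschitz constant $L$ for $\Acc$ can be chosen once and for all. A secondary subtlety is that the only place genuine weak-$*$ (rather than strong) convergence is used is the AV sub-case, where it suffices precisely because $\bu_i$ enters the dynamics only through its time-integral; everywhere else the convergence has already been upgraded to uniform, so no further weak-convergence arguments (e.g.\ no need for convexity or lower semicontinuity) are required at this stage. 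Once all $|I|$ components are handled, taking the maximum over $i$ yields $\|\bx[\bu]-\bx[\tilde{\bu}]\|_{W^{1,\infty}((0,T);\R^{|I|})} \to 0$, completing the proof.
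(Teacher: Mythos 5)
Your proposal is correct and follows essentially the same route as the paper's proof: an induction along the one-sided coupling in which the weak-$*$ convergence is exploited only for the double-integrator (AV) components, and the human-driven components are handled by a Gr\"onwall estimate that hinges on the uniform lower bound on the headway ($d_{\text{safe}}$ from \cref{def:U} for AVs, $d_{\min}$ from \cref{th:well_posedness_Bando} for HVs). The only difference in execution is that the paper integrates the follow-the-leader term exactly (turning $\int_0^t \tfrac{\bv_k-\bv_{k+1}}{(\bx_k-\bx_{k+1}-l)^2}\dd s$ into a boundary term in $\tfrac{1}{\bx_k-\bx_{k+1}-l}$) instead of invoking a generic local Lipschitz bound for $\Acc$, thereby avoiding the uniform velocity bounds your argument needs --- bounds which, as you correctly note, are in any case available along the induction.
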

\begin{proof}
The proof consists of iteratively going through the one sided coupled system dynamics in \cref{def:dynamics}.
Take \(k=\min_{l\in\Ia} l \), i.e., the index of the first autonomous vehicle. 
Then, this is where we have two different controls \((\bu, \tilde{\bu})\in\bU^2\) applied, so that we obtain for the corresponding dynamics at \(t\in[0,T]\)
\begin{align*}
    \bx_{k}[\bu](t)&=\bx_{\circ,k}+t\bv_{\circ,k}+\int_{0}^{t}\int_{0}^{s} \bu_{k}(z)\dd z\dd s=\bx_{\circ,k}+t\bv_{\circ,k}+\int_{0}^{t} (t-z)\bu_{k}(z)\dd z\\ \bx_{k}[\tilde{\bu}](t)&=\bx_{\circ,k}+t\bv_{\circ,k}+\int_{0}^{t}\int_{0}^{s} \tilde{\bu}_{k}(z)\dd z\dd s=\bx_{\circ,k}+t\bv_{\circ,k}+\int_{0}^{t}(t-z)\tilde{\bu}_{k}(z)\dd z.
\end{align*}
Taking the \(W^{1,\infty}\) norm of the difference, we obtain
\begin{equation}
\begin{aligned}
  \|\bx_{k}[\bu]-\bx_{k}[\tilde{\bu}]\|_{W^{1,\infty}((0,T))}&\leq (1+T)\sup_{t\in[0,T]}\bigg|\int_{0}^{t}\bu_{k}(s)-\tilde{\bu}_{k}(s)\dd s\bigg|\\
  &\quad + \sup_{t\in[0,T]}\bigg|\int_{0}^{t}s\big(\bu_{k}(s)-\tilde{\bu}_{k}(s)\big)\dd s\bigg|.
  \end{aligned}
  \label{eq:42}
\end{equation}
However, if \(\tilde{\bu}\weakstar \bu\), 
the last two terms converge to zero so that we indeed obtain
\begin{equation}
\tilde{\bu}\weakstar\bu \text{ in } L^{\infty}\big((0,T);\R^{|I|}\big)\implies \|\bx_{k}[\bu]-\bx_{k}[\tilde{\bu}]\|_{W^{1,\infty}((0,T))}\rightarrow 0.\label{eq:42_1}
\end{equation}
Continuing with the next vehicle \((k+1)\), it either holds that \((k+1)\in\Ia\) and we just get an identical estimate to the one in \cref{eq:42}, or we have \((k+1)\in\Ih\).
Then, the dynamics for this vehicle are given by the Bando-FtL acceleration from which we obtain for \(t\in[0,T]\) in the integral form for the corresponding velocities
\begin{align}
    &|\bv_{k+1}[\bu](t)-\bv_{k+1}[\tilde{\bu}](t)|\label{eq:estimate_v}\\
    &=\bigg|\alpha \int_{0}^{t}\big(\bv_{k+1}[\tilde{\bu}](s)-\bv_{k+1}[\bu](s)\big)\dd s\notag\\
    &\quad +\alpha\int_{0}^{t}V\big(\bx_{k}[\bu](s)-\bx_{k+1}[\bu](s)-l\big)-V\big(\bx_{k}[\tilde{\bu}](s)-\bx_{k+1}[\tilde{\bu}](s)-l\big)\dd s \notag\\
    &\quad +\beta\int_{0}^{t}\bigg(\tfrac{\bv_{k}[\bu](s)-\bv_{k+1}[\bu](s)}{\big(\bx_{k}[\bu](s)-\bx_{k+1}[\bu](s)-l\big)^{2}}-\tfrac{\bv_{k}[\tilde{\bu}](s)-\bv_{k+1}[\tilde{\bu}](s)}{\big(\bx_{k}[\tilde{\bu}](s)-\bx_{k+1}[\tilde{\bu}](s)-l\big)^{2}}\bigg)\dd s\notag \bigg|
    \intertext{yielding when integrating the last term}
    &\leq \alpha \int_{0}^{t}\big|\bv_{k+1}[\tilde{\bu}](s)-\bv_{k+1}[\bu](s)\big|\dd s\notag\\
    &\quad +\alpha\|V'\|_{L^{\infty}(\R)}\int_{0}^{t}\big|\bx_{k}[\bu](s)-\bx_{k}[\tilde{\bu}](s)\big|+\big|\bx_{k+1}[\bu](s)-\bx_{k+1}[\tilde{\bu}](s)\big|\dd s\notag\\
    &\quad +\beta\Big|\tfrac{1}{\bx_{k}[\bu](t)-\bx_{k+1}[\bu](t)-l}-\tfrac{1}{\bx_{k}[\tilde{\bu}](t)-\bx_{k+1}[\tilde{\bu}](t)-l}\Big|\notag.
\end{align}

Using the lower bound on the headway which is guaranteed in \cref{eq:d_min} and which is invariant with respect to the leader's behavior we have with
\begin{align*}
\bA_{k+1}&\:-\bv_{\circ,k}-\alpha T\|V\|_{L^{\infty}(\R)} +\alpha(\bx_{\circ,k}-\bx_{\circ,k+1}-l)-\beta\tfrac{1}{\bx_{\circ,k}-\bx_{\circ,k+1}-l}\\
\boldsymbol{d}_{\min,k+1}&\: \tfrac{\bA_{k+1}+\sqrt{\bA^{2}_{k+1}+4\alpha\beta}}{2\alpha}
\end{align*}
that
\begin{align}
  \cref{eq:estimate_v}&\leq\alpha \int_{0}^{t}\big|\bv_{k+1}[\tilde{\bu}](s)-\bv_{k+1}[\bu](s)\big|\dd s\label{eq:stability_1}\\
    &\quad +\alpha\|V'\|_{L^{\infty}(\R)}\int_{0}^{t}\!\!\big|\bx_{k}[\bu](s)-\bx_{k}[\tilde{\bu}](s)\big|+\big|\bx_{k+1}[\bu](s)-\bx_{k+1}[\tilde{\bu}](s)\big|\dd s\\
    &\quad +\tfrac{\beta}{\boldsymbol{d}_{\min,k+1}^{2}}\Big(\big|\bx_{k}[\bu](t)-\bx_{k}[\tilde{\bu}](t)\big| +\big|\bx_{k+1}[\bu](t)-\bx_{k+1}[\tilde{\bu}](t)\big|\Big).\label{eq:stability_3}
\end{align}
Combining this for \(t\in[0,T]\) with the trivial estimate
\begin{equation}
|\bx_{k+1}[\bu](t)-\bx_{k+1}[\tilde{\bu}](t)|\leq \int_{0}^{t}\big|\bv_{k+1}[\bu](s)-\bv_{k+1}[\tilde{\bu}](s)\big|\dd s\qquad \forall t\in[0,T]\label{eq:bx[u]_bv[u]}
\end{equation}
we end up with an estimate in the \(W^{1,\infty}\) norm which reads for \(t\in[0,T]\) as
\begin{align*}
    &\|\bx_{k+1}[\bu]-\bx_{k+1}[\bu]\|_{W^{1,\infty}((0,t))}\\
    &=\|\bx_{k+1}[\bu]-\bx_{k+1}[\bu]\|_{L^{\infty}((0,t))}+\|\bv_{k+1}[\bu]-\bv_{k+1}[\bu]\|_{L^{\infty}((0,t))}
    \intertext{taking advantage of \cref{eq:bx[u]_bv[u]} and \crefrange{eq:stability_1}{eq:stability_3}}
    &\leq \Big(1+\alpha+\tfrac{\beta}{\boldsymbol{d}_{\min,k+1}^{2}}\Big)\int_{0}^{t}\|\bv_{k+1}[\bu]-\bv_{k+1}[\tilde{\bu}]\|_{L^{\infty}((0,s))}\dd s\\
    &\quad + \alpha\|V'\|_{L^{\infty}(\R)}\int_{0}^{t}\|\bx_{k+1}[\bu]-\bx_{k+1}[\tilde{\bu}]\|_{L^{\infty}((0,s))}\dd s\\
    &\quad +\Big(\alpha T\|V'\|_{L^{\infty}(\R)}+\beta\tfrac{1}{\boldsymbol{d}_{\min,k+1}^{2}}\Big)\|\bx_{k}[\bu]-\bx_{k}[\tilde{\bu}]\|_{L^{\infty}((0,T))}\\
    &\leq \max\bigg\{\Big(1+\alpha+\tfrac{\beta}{\boldsymbol{d}_{\min,k+1}^{2}}\Big),\alpha\|V'\|_{L^{\infty}(\R)}\bigg\}\int_{0}^{t}\|\bx_{k+1}[\bu]-\bx_{k+1}[\tilde{\bu}]\|_{W^{1,\infty}((0,s))}\dd s\\
    &\quad+\Big(\alpha T\|V'\|_{L^{\infty}(\R)}+\beta\tfrac{1}{\boldsymbol{d}_{\min,k+1}^{2}}\Big)\|\bx_{k}[\bu]-\bx_{k}[\tilde{\bu}]\|_{L^{\infty}((0,T))}.
\end{align*}
Applying Gr\"onwall's inequality \cite[Chapter I, III Gronwall's inequality]{walter}, we obtain
\begin{align*}
    &\|\bx_{k+1}[\bu]-\bx_{k+1}[\bu]\|_{W^{1,\infty}((0,T))}\\
    &\leq \e^{T\max\left\{\left(1+\alpha+\frac{\beta}{\boldsymbol{d}_{\min,k+1}^{2}}\right),\alpha\|V'\|_{L^{\infty}(\R)}\right\}}\\
    &\qquad \cdot \Big(\alpha T\|V'\|_{L^{\infty}(\R)}+\beta\tfrac{1}{\boldsymbol{d}_{\min,k+1}^{2}}\Big)\|\bx_{k}[\bu]-\bx_{k}[\tilde{\bu}]\|_{L^{\infty}((0,T))}.
\end{align*}
As the last term converges to zero as detailed in \cref{eq:42_1}, we  obtain the claimed convergence also for the \((k+1)-\)th vehicle.
Inductively, we obtain the convergence for all \(l\in I\).
\end{proof}

\begin{remark}[The stability estimate in \cref{lem:stability_solution}]\label{rem:stability}
It is worth mentioning that the estimate in \cref{eq:stability_3} relies on the well-posedness of the used car-following model so that a minimal distance between vehicles is guaranteed independently of what the leader does. 
Without this particular estimate, the continuity of the control to state mapping might not hold as the solution would cease to exist (as the vehicles could get arbitrarily close).
Compare in particular \cite[Theorem 3.1]{wang2021optimal} which lacks such an estimate. Further, the authors are not precise in the statement of continuity of the control to state mapping (which can only hold if the car-following model is well-posed) and seem to not work with the weak-star convergence of the control set. However, this becomes problematic when later claiming that the set of admissible controls is compact in the proof of \cite[Theorem 3.1]{wang2021optimal}. Clearly, when prescribing box constraints, as we also do in this contribution, the corresponding set is weak star and as such weakly compact, but clearly not compact in the \(L^{p},\ p\in[1,\infty)\) topology, and for sure not in the uniform topology. This could be fixed by postulating that the set of admissible controls is uniformly \(TV\) bounded or uniform Lipschitz (compare Ascoli-Arzela) which would imply this compactness in \(L^{p},\ p\in[1,\infty),\ L^{\infty}\) respectively, however, it would restrict the control set unnecessarily and would be more difficult to implement in the optimization algorithm. We have demonstrated in the former continuity estimate that it suffices for  \(\bu\) to converge in the weak star topology. 

This being said, it becomes clear why their proof of existence of minimizers becomes trivial as the convergence in the norm (which they claimed falsely) is compatible with the chosen objective functions. Anticipating \cref{th:existence_minimizer}, we will give the existence of a minimizer when the set of admissible controls \(\bU\) is not further restricted, and only the convexity of the objective function with regard to the control is assumed (see \cref{def:ocp}). Such could also be weakened to lower weakly semi-continuity, but we will not detail this further.
\end{remark}

The following theorem establishes the existence of a minimizer by following the classical approach of taking a minimizing sequence and showing that it is compact in the proper topology, so that one can pass to the limit in the objective function, dynamics, and constraints.
\begin{theorem}[Existence of a minimizer]
\label{th:existence_minimizer}
Let the optimal control problem in \cref{def:ocp} be given together with the therein stated assumptions. Then, for a sufficiently small \(a_{\min}\in\R_{<0},\) there exists \(\bu^{*}\in\bU\) so that it holds
\begin{equation*}
\begin{split}    J\Big(\bx[\bu^{*}],\bv[\bu^{*}],\bu^{*}\Big)&=\inf_{\bu\in\bU}\int_{0}^{T}L(\bx[\bu](t),\bv[\bu](t),\bu(t))\dd t+ J_{T}(\bx[\bu](T),\bv[\bu](T))
\end{split}
\end{equation*}
with \(\bx[\bu],\bv[\bu]\) as defined in \cref{eq:bx[bu]}. 
\end{theorem}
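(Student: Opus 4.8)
The plan is to use the direct method of the calculus of variations. First I would take a minimizing sequence $(\bu^{(n)})_{n\in\N}\subset\bU$, i.e.\ a sequence with $J(\bx[\bu^{(n)}],\bv[\bu^{(n)}],\bu^{(n)})\to\inf_{\bu\in\bU}J$. Since the candidate admissible set is non-empty by \cref{cor:well-posedness_system} for $a_{\min}$ sufficiently small, the infimum is over a non-empty set, and since $L,S\geq 0$ the infimum is finite and non-negative, so a minimizing sequence exists. Each $\bu^{(n)}$ satisfies the pointwise box constraints $a_{\min}\leq\bu^{(n)}_{i}(t)\leq a_{\max}$ on $\Ia$ and vanishes on $\Ih$, hence $(\bu^{(n)})$ is bounded in $L^{\infty}((0,T);\R^{|I|})$. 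By the Banach--Alaoglu theorem (sequential weak-$*$ compactness of bounded sets in $L^{\infty}=(L^{1})^{*}$, which is separable), there is a subsequence, not relabeled, and a limit $\bu^{*}\in L^{\infty}((0,T);\R^{|I|})$ with $\bu^{(n)}\weakstar\bu^{*}$.

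Next I would check that $\bu^{*}\in\bU$, i.e.\ that the limit is itself admissible. The pointwise box constraints and the constraint $\bu_{i}\equiv 0$ on $\Ih$ define a convex set that is closed in $L^{\infty}$ (it is an intersection of half-spaces defined by continuous linear functionals against $L^{1}$-densities), hence weak-$*$ closed; so $\bu^{*}$ inherits $a_{\min}\leq\bu^{*}_{i}\leq a_{\max}$ a.e.\ and $\bu^{*}_{i}\equiv 0$ on $\Ih$. The velocity-nonnegativity constraint $\bv_{\circ,i}+\int_{0}^{t}\bu_{i}^{*}(s)\dd s\geq 0$ passes to the limit because $\bu^{(n)}\weakstar\bu^{*}$ implies $\int_{0}^{t}\bu^{(n)}_{i}\to\int_{0}^{t}\bu^{*}_{i}$ for each fixed $t$ (testing against $\mathbb{1}_{[0,t]}\in L^{1}$). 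For the safety-distance constraint $\bx_{i-1}[\bu](t)-\bx_{i}[\bu](t)-l\geq d_{\text{safe}}$ on $[0,T]\times\Ia$, I invoke \cref{lem:stability_solution}: from $\bu^{(n)}\weakstar\bu^{*}$ we get $\bx[\bu^{(n)}]\to\bx[\bu^{*}]$ in $W^{1,\infty}((0,T);\R^{|I|})$, hence uniformly, so the (closed) inequality constraint survives the limit. This step also implicitly uses that $\bx[\bu^{*}]$ is well-defined, which follows from \cref{cor:well-posedness_system} once $\bu^{*}$ is known to lie in $\bU_{a_{\min}}^{a_{\max}}$.

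Finally I would pass to the limit in the objective. Using $\bx[\bu^{(n)}]\to\bx[\bu^{*}]$ and $\bv[\bu^{(n)}]\to\bv[\bu^{*}]$ uniformly on $[0,T]$ (from \cref{lem:stability_solution}) together with continuity of $S$, the final-cost term converges: $S(\bx[\bu^{(n)}](T),\bv[\bu^{(n)}](T))\to S(\bx[\bu^{*}](T),\bv[\bu^{*}](T))$. For the running cost, I split $L(\bx[\bu^{(n)}](t),\bv[\bu^{(n)}](t),\bu^{(n)}(t))$: the first two arguments converge uniformly, and $L$ is $C^{1}$ (hence locally uniformly continuous) with the state trajectories confined to a fixed compact set by the uniform bounds of \cref{th:well_posedness_Bando}, so replacing $(\bx[\bu^{(n)}],\bv[\bu^{(n)}])$ by $(\bx[\bu^{*}],\bv[\bu^{*}])$ in $L$ changes the integral by $o(1)$. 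It then remains to handle $\int_{0}^{T}L(\bx[\bu^{*}](t),\bv[\bu^{*}](t),\bu^{(n)}(t))\dd t$, where only the last argument moves and does so only weakly-$*$. Here I use the convexity of $L(a,b,\cdot)$: the functional $\bw\mapsto\int_{0}^{T}L(\bx[\bu^{*}](t),\bv[\bu^{*}](t),\bw(t))\dd t$ is convex and strongly lower semicontinuous on $L^{\infty}$ (indeed on $L^{1}$), hence weakly-$*$ lower semicontinuous (a convex strongly-closed set is weakly closed; this is the standard Tonelli--Ioffe lower semicontinuity argument, e.g.\ via Mazur's lemma). Combining, $J(\bx[\bu^{*}],\bv[\bu^{*}],\bu^{*})\leq\liminf_{n}J(\bx[\bu^{(n)}],\bv[\bu^{(n)}],\bu^{(n)})=\inf_{\bu\in\bU}J$, and since $\bu^{*}\in\bU$ the reverse inequality is automatic, giving equality.

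The main obstacle is the last limit passage in the running cost: weak-$*$ convergence of $\bu^{(n)}$ is too weak to pass directly inside the nonlinear $L$, so one genuinely needs the convexity-in-the-control hypothesis and a lower-semicontinuity theorem for integral functionals, combined carefully with the uniform (strong) convergence of the state variables supplied by \cref{lem:stability_solution} to decouple the state-dependence from the control-dependence. Everything else is the routine machinery of the direct method together with the a priori bounds already established.
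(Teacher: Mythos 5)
Your proposal is correct and follows essentially the same route as the paper's proof: a minimizing sequence, Banach--Alaoglu for a weak-$*$ limit, admissibility of the limit via the weak-$*$ closedness of the box/velocity constraints and via \cref{lem:stability_solution} for the safety-distance constraint, and the final limit passage by decoupling the uniformly convergent state arguments from the weak-$*$ convergent control argument, where convexity of \(L(a,b,\cdot)\) supplies the lower semicontinuity. Your write-up is in fact slightly more explicit than the paper's (e.g.\ naming the Tonelli--Mazur argument for the running cost), but the structure and the key ingredients are identical.
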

\begin{proof}
As the objective function is bounded from below we know that there exists an infimum and a sequence \(\big(\bu_{n}\big)_{n\in\N_{\geq1}}\subset \bU\) such that
\[
\lim_{n\rightarrow\infty} J(\bx[\bu_{n}],\bv[\bu_{n}],\bu_{n})= \inf_{\bu\in\bU}J(\bx[\bu],\bv[\bu],\bu).
\]
Due to the assumptions on \(\bu_{n}\in\bU \subset \bU_{a_{\min}}^{a_{\max}}\) (see \cref{ass:control_set}) we have
\[
\|\bu_{n}\|_{L^{\infty}((0,T);\R^{|I|})}\leq \max\{a_{\max},-a_{\min}\}
\]
and thus 
\[\sup_{n\in\N_{\geq1}}\|\bu_{n}\|_{L^{\infty}((0,T);\R^{|I|})}\leq \max\{a_{\max},-a_{\min}\},\]
so that we can invoke Banach–Alaoglu–Bourbak theorem \cite[Theorem 3.16]{brezis} to conclude that there exists \(\bu^{*}\in\bU\) with 
\[
a_{\min}\leq \bu^{*}(t)\leq a_{\max},\ t\in[0,T] \text{ a.e.}
\]
and a subsequence (which we again index by \(n\in\N_{\geq1}\)) with
\[
\bu_{n}\xrightharpoonup[n\rightarrow\infty]{*}\bu^{*} \text{ in } L^{\infty}\big((0,T);\R^{|I|}\big).
\]
As it holds that \(\forall n\in\N_{\geq1}\)
\[
\bv_{\circ}+\int_{0}^{t}\bu_{n}(s)\dd s\geq0\quad \forall t\in[0,T]
\]
we can take \(n\rightarrow\infty\) and, using the weak-star convergence of \(\bu_{n}\rightarrow \bu^{*},\) we obtain
\[
\bv_{0}+\int_{0}^{t}\bu^{*}(s)\dd s\geq 0\quad \forall t\in[0,T]
\]
which implies that \(\bu^{*}\in \bU_{a_{\min}}^{a_{\max}}\).
Next, we show that \(\bu^{*}\in \bU\) which means that \(\bU\) is closed with respect to the weak star topology. Recalling the definition of \(\bU\) we have
\begin{equation}
\bx_{i-1}[\bu_{n}](t)-\bx_{i}[\bu_{n}](t)-l\geq d_{\text{safe}}\quad \forall t\in[0,T]\ \forall i\in \Ia\ \forall n\in\N_{\geq1}.\label{eq:123123}
\end{equation}
Using the result in \cref{lem:stability_solution}, we know that
\[
\lim_{n\rightarrow\infty}\|\bx[\bu_{n}]-\bx[\bu^{*}]\|_{W^{1,\infty}((0,T);\R^{|I|})}=0,
\]
i.e., that we can pass to the limit in \cref{eq:123123} to obtain
\[
\bx_{i-1}[\bu^{*}](t)-\bx_{i}[\bu^{*}](t)-l\geq d_{\text{safe}}\ \forall t\in[0,T]\ \forall i\in \Ia,
\]
which indeed means that \(\bu^{*}\in \bU\). By the very construction we also have that \(\bx[\bu^{*}],\bv[\bu^{*}]\) are solutions of the system dynamics in \cref{def:dynamics}. 
So it remains to show that \(\bu^{*}\) is a minimizer. To this end, write
\begin{align*}
J(\bx[\bu^{*}],\bv[\bu^{*}],\bu^{*})&=\lim_{n\rightarrow \infty} J(\bx[\bu_{n}],\bv[\bu_{n}],\bu^{*})\leq \lim_{n\rightarrow\infty} J(\bx[\bu_{n}],\bv[\bu_{n}],\bu_{n})\\ &=\inf_{\bu\in\bU}J(\bx[\bu],\bv[\bu],\bu)
\end{align*}
where the first equality is a consequence of the uniform convergence of \(\bx[\bu_{n}]\) and \(\bv[\bu_{n}]\) in \(W^{1,\infty}\big((0,T);\R^{|I|}\big)\), 
while the inequality is a consequence of the lower weakly semi-continuity of the objective function which is guaranteed by the convexity assumption in \cref{def:ocp}. Altogether, we have constructed a minimizer \(\bu^{*}\in \bU,\) proving the claim.
\end{proof}

\begin{remark}[Minimal assumptions on the control set and objective function]
%This being said, it becomes clear why their proof of existence of minimizers becomes trivial as the convergence in the norm (which they claimed falsely) is compatible with the chosen objective functions. 
In \cref{th:existence_minimizer}, we give the existence of a minimizer when the set of admissible controls \(\bU\) is not further restricted, and only the convexity of the objective function with regard to the control is assumed (see \cref{def:ocp}). Such could also be weakened to lower weakly semi-continuity, but we will not detail this further.
\end{remark}

\begin{remark}[Generalization to different car following models]
The proposed optimal control framework is applicable to other second order car-following models. 
%and later also the optimality conditions one might use to identify potential optima are 
%
However, the well-posedness of the model needs to be satisfied independently of the proposed controls.
\end{remark}

\begin{remark}[Formulation with additive acceleration for the autonomous cars]
Making the assumption that car-following behavior (i.e.\ human behaviour when using the nomenclature of this contribution) should be close to what an autonomous vehicle might aim for, one could define the control in \cref{def:ocp} and in the dynamics \cref{eq:dynamics} additively, meaning that replacing the double integrator for \((t,i)\in[0,T]\times\Ia\)
\[
\dot{\bv}_{i}(t)=\bu_{i}(t)\quad  \text{by} \quad \dot{\bv}_{i}(t)=\Acc\big(\bx_{i}(t),\bx_{i-1}(t),\bv_{i}(t),\bv_{i-1}(t)\big) +\bu_{i}(t) .
\]
However, one might run into a well-posedness problem when allowing \(\bu_{i}\) to be positive. 
Since the goal is to smooth traffic, it might be sufficient to allow only deceleration in order to create reasonable gaps between vehicles. We do not go into details, but refer to \cite{wang2021optimal} where such an approach was taken.
\end{remark}

\section{Numerical Realization} 
\label{sec:method}
In this section, we describe a numerical approach for solving the optimal control problem in \cref{def:ocp}. 
We will use a direct method, which essentially discretizes the optimal control problem and transforms it into a finite-dimensional non-linear program then solves the discrete problem by means of gradient descent. 
We use a penalty approach to handle the non-linear state constraints described in the admissible control set $\bU$ \cref{eq:U}. 
We employ the adjoint method to derive an analytical expression for the continuous-time gradient of the problem. 
To discretize the problem, we approximating the control using piece-wise constant basis functions.
We numerically integrate the system dynamics \cref{eq:dynamics} (forward integration) and the adjoint system (backward integration) using standards numerical integration schemes, while ensuring that the methods for forward and backward integration are compatible in the sense that they are performed on the same fixed grid and using the same integration method. 
Finally, the discrete problem can then be solved using standard gradient-based optimization techniques.

\subsection{Gradient Evaluation} 
\label{sec:gradient_computation}
To simplify the notation, we introduce the combined state vector for \(t\in[0,T]\)
\begin{align}
    \by(t) = \begin{bmatrix}
        \bx(t) \\
        \bv(t)
    \end{bmatrix}, 
\end{align}
and the vector valued function $\bbf: \R^{2|I|} \mapsto \R^{2|I|}$ that describes the system dynamics in \cref{def:dynamics} such that
\begin{align}
    \dot{\by}(t) = \bbf(t, \by(t), \bu(t)),\ t\in[0,T].
\end{align}

The aim is to compute the variation of the objective function with respect to the control
%Computing this variations directly gives us the following expression (by direct application of the chain rule)
\begin{equation}
    \begin{aligned}
        J_{\bu}(\by, \bu)[\tilde{\bu}] = &  \int_{0}^{T} L_{\by}(\by(t), \bu(t)) \by_{\bu}(t) \tilde{\bu}(t) + L_{\bu}(\by(t), \bu(t)) \dd t \\
    & + S_{\by}(\by(T)) \by_{\bu}(T) \tilde{\bu}(t), 
    \end{aligned}
\end{equation}
where $t\mapsto\tilde{\bu}(t)$ is a direction function. 
Here, we use the notation $ J_{\bu} = \frac{\partial J}{\partial \bu}$. 
Evaluating the expressions $\by_{\bu}$ is difficult due to the implicit dependence of the combined state vector on the control. 
For this, we use the adjoint method to evaluate $J_{\bu}$ without the need to explicitly evaluating $\by_{\bu}$. 
This approach has been extensively described in the literature \cite{chavent2010nonlinear, cao2003adjoint, givoli2021tutorial} and widely used to solve optimal control problems in various application domains \cite{pikulinski2021adjoint, bayen2004adjoint, lin2014control, eichmeir2021adjoint, gugat2005optimal}. Below, we give the derivation for the adjoint system associated with our optimal control problem. 

%As a first step of the adjoint method, we write the Hamiltonian function by augmenting the objective function $J(\by, \bu)$ with the system dynamics $\bbf(t, \by(t), \bu(t))$ multiplied by a vector of Lagrange multipliers $\bzeta \in L^{\infty}\big((0,T);\R^{2|I|})$
As a first step, we write the Hamiltonian
\begin{align}
\label{eq:hamiltonian}
    H(\by, \bu, \bzeta) = \int_{0}^T L(\by(t), \bu(t)) + \bzeta^{\top}(t) \big(\dot{\by}(t) - \bbf (t, \by(t), \bu(t))\big) \dd t + S(\by(T)), 
\end{align}
where $\bzeta \in W^{1,\infty}\big((0,T);\R^{2|I|})$ is a vector of Lagrange multipliers. 
%Note that $H(\by, \bu, \bzeta) = J(\by, \bu)$ for every solution $\by(t)$ of the initial value problem in \ref{eq:dynamics}, this is true for any choice of $\bzeta(t)$. 
We then compute the first variation of $H$ with respect to \(\by\) and \(\bu\)
\begin{equation}
\begin{aligned}
    \delta H(\by, \bu, \bzeta) = & \int_0^T \big( L^{\top}_{\by}(\by(t), \bu(t)) \delta\by(t) + L^{\top}_{\bu}(\by(t), \bu(t)) \delta \bu(t) \big) \dd t \\
    & + \int_0^T \bzeta^{\top}(t) \big(\delta \dot{\by}(t)  + \bbf_{\by}(t, \by(t), \bu(t)) \delta \by(t)+ \bbf_{\bu}(t, \by(t), \bu(t)) \delta \bu(t) \big) \dd t \\
    & + S^{\top}_{\by}(\by(T)) \delta \by(T).
\end{aligned}
\label{eq:hamiltonian_var}
\end{equation}
% \todo[inline]{First variation is usually like \href{https://en.wikipedia.org/wiki/First_variation}{https://en.wikipedia.org/wiki/First\_variation}, but here, you aim for computing it in both \(\by,\bu\) but not in \(\bzeta\). So this is confusing. ARWA: Okay I see.  I guess what we are doing here is taking the "partial variation" with respect to the control. What is the right term to use in this case? See Email. I think you can call it like this. This is after all not a rigorous derivation but a sketch on how we get it...So say first variation of \(H\) with regard to \(\by\) and \(\bu\).}
%
%We use integration by parts to shift derivatives from the states $\by$ to the Lagrange multipliers $\bzeta$:
Using integration by parts, we write 
\begin{align}
\label{eq:integration_byparts}
    \int_{0}^T \bzeta^{\top}(t) \delta\dot{\by}(t)\dd t & =  \bzeta^{\top}(T) \delta \by(T)
    - \bzeta^{\top}(0) \delta \by(0) -\int_0^T \dot{\bzeta}^{\top}(t) \delta\by(t) \dd t
\end{align}
By plugging \cref{eq:integration_byparts} in \cref{eq:hamiltonian_var}
and rearranging the terms we obtain
\begin{align*}
    \delta H(\by, \bu, \bzeta) = & \int_0^T \big( L^{\top}_{\bu}(\by(t), \bu(t)) - \bzeta^{\top}(t)\bbf_{\bu}(t, \by(t), \bu(t)) \big) \delta \bu(t) \dd t \\
     & + \int_0^T \big( L^{\top}_{\by}(\by(t), \bu(t)) - \bzeta^{\top}(t)\bbf_{\by}(t, \by(t), \bu(t)) + \dot{\bzeta}^{\top}(t) \big)\delta \by(t) \dd t \\
     & + \bzeta^{\top}(0) \delta \by(0)\\
     & + \big(S^{\top}_{\by}(\by(T)) -  \bzeta^{\top}(T)\big) \delta \by(T). 
\end{align*}
Note that $\delta \by(0) = 0 $ because the initial conditions are fixed. With the Lagrange multipliers satisfying the final value problem (adjoint system) 
\begin{equation}
    \begin{aligned}
    \bzeta(T) & = S_{\by}(\by(T))  \\
    \dot{\bzeta}(t)  & = -L_{\by}(\by(t), \bu(t)) + \bbf^{\top}_{\by}(t, \by(t), \bu(t)) \bzeta(t), 
\end{aligned}
\label{eq:adjoint_system}
\end{equation}
the expression for the gradient reduces to 
\begin{align}
     \delta H(\by, \bu, \bzeta)  = & \int_0^T \big( L^{\top}_{\bu}(\by(t), \bu(t)) - \bzeta^{\top}(t)\bbf_{\bu}(t, \by(t), \bu(t)) \big) \delta \bu(t) \dd t. 
\label{eq:simplified_var}
\end{align}
The linear system of ODEs (with respect to \(\bzeta\)) in \cref{eq:adjoint_system} describes a system of final value problems. The conditions at the end time $T$ are defined by the terminal cost function $S(\by(T))$. Thus, as commonly done, the adjoint system is integrated in the backward direction over the interval $[0, T]$. 
% After solving the adjoint system for the variables $\bzeta(t)$ we can now compute the variation of the objective functional from \cref{\label{eq:simplified_var}} as the direction of. 

% Given that the direction of the gradient is the steepest ascent direction, the largest increase in $\delta H(\by, \bu, \bzeta)$ is obtained when $\delta \bu(t) = L_{\bu}(\by(t), \bu(t)) - \bzeta(t)\bbf_{\bu}(t, \by(t), \bu(t))$. 

% \subsubsection{Adjoint System} 
% \label{sec: adjoint_system}
We now explicitly write the adjoint system in equation \cref{eq:adjoint_system} for our specific dynamics in \cref{eq:dynamics}.
%
% Let us first give more details on the quantities that appear in \cref{eq:adjoint_system}.
% %
% The combined state vector $\by(t)$ is a concatenation of the position and velocity vectors of all vehicles in the platoon including the leader, i.e.
% \begin{align}
%     \by(t) = \begin{bmatrix}
%         \bx_1(t) \\
%         \bx_2(t) \\
%         \dots \\
%         \bx_{|I|}(t) \\
%         \bv_1(t) \\
%         \bv_2(t) \\
%         \dots \\
%         \bv_{|I|}(t)\\
%     \end{bmatrix}.
% \end{align}
% The vector valued function $\bbf: \R^{2|I|} \mapsto \R^{2|I|}$ is given by 
% \begin{align}
%     \bbf(t, \by(t), \bu(t)) = 
%     \begin{bmatrix}
%         \bv_1(t) \\
%         \bv_2(t) \\
%         \dots \\
%         \bv_{|I|}(t)\\
%         \bbf_1(t, \by(t), \bu(t))\\
%         \bbf_2(t, \by(t), \bu(t))\\
%         \vdots\\
%         \bbf_{|I|}(t, \by(t), \bu(t))\\
%     \end{bmatrix}.
% \end{align}
% where 
% \begin{align}
%     \bbf_i(t, \by(t), \bu(t)) = \begin{cases}
%         \Acc(\bx_i(t), \bx_{i-1}(t), \bv_i(t), \bv_{i-1}(t)) & \text{if } i \in I_h\\
%         \bu_i & \text{if } i\in I_a
%     \end{cases}
% \end{align}
% recall that $\bx_0(t) = \xl(t)$ and $\bv_0(t) = \vl(t)$. 
%
The dynamics are only coupled in one direction, i.e., the state of a vehicle $i \in I$ depends only on its preceding vehicle $i-1$. 
This makes the time-dependent Jacobian matrix $t\mapsto\bbf_{\by}(t, \by(t), \bu(t))$ have a sparse structure of the following form. For \(t \in [0,T]\), 
\begin{align}
    \bbf_{\by}(t, \by(t), \bu(t)) = \begin{bmatrix} \boldsymbol{A}(t) & \boldsymbol{B}(t) \\ \boldsymbol{C}(t)& \boldsymbol{D}(t)
    \end{bmatrix}
\end{align}
where $\boldsymbol{A}(t), \boldsymbol{B}(t), \boldsymbol{C}(t), \boldsymbol{D}(t) \in \R^{|I|\times |I|}$, \ $\boldsymbol{A}
(t) = \boldsymbol{0}$, $\boldsymbol{B}(t) = \boldsymbol{I}$, and $\boldsymbol{C}(t)$ and $\boldsymbol{D}(t)$ are lower-bidiagonal matrices. 
The entries of $\boldsymbol{C}$ (omitting the time and function dependencies) are 
\begin{align}
    \boldsymbol{C}_{i, i} & = \begin{cases}
        \Acc_{\bx_{i}}(\bx_i, \bx_{i-1}, \bv_{i}, \bv_{i-1}) &\! \forall i \in I_h\\
        0 & \!\forall i \in I_a, 
    \end{cases}\\
    \boldsymbol{C}_{i+1, i} & = \begin{cases}
        \Acc_{\bx_{i}}(\bx_{i+1}, \bx_{i}, \bv_{i+1}, \bv_{i}) & \!\forall i \in I_h \setminus \{M+N\}\\
        0 & \!\forall i \in I_a \setminus \{M+N\}. 
    \end{cases}
\end{align}
The entries of $\boldsymbol{D}$ are
\begin{align}
    \boldsymbol{D}_{i, i} & = \begin{cases}
        \Acc_{\bv_{i}}(\bx_i, \bv_{i-1}, \bv_{i}, \bv_{i-1}) &\!\! \forall i \in I_h\\
        0 &\!\! \forall i \in I_a, 
    \end{cases}\\
    \boldsymbol{D}_{i+1, i} & = \begin{cases}
        \Acc_{\bv_{i}}(\bx_{i+1}, \bx_{i}, \bv_{i+1}, \bv_{i}) &\!\! \forall i \in I_h \setminus \{M+N\}\\
        0 &\!\! \forall i \in I_a \setminus \{M+N\}. 
    \end{cases}
\end{align}
Recall that $\bx_0(t) = \xl(t)$ and $\bv_0(t) = \vl(t)$. 
The Jacobian matrix $\bbf_{\bu}(t, \by(t), \bu(t))$ has the form 
\begin{align}
    \bbf_{\bu}(t, \by(t), \bu(t)) = 
    \begin{bmatrix}
    \mathbf{0} \\ \boldsymbol{E}
    \end{bmatrix}
\end{align}
where $\boldsymbol{E} \in \mathbb{R}^{|I|}$ is a diagonal matrix with entries $E_{i, i} = 1$ if $i \in I_a$ and zero otherwise. 
This yields the following representation of the gradient $t\mapsto\bG(\by(t), \bu(t), \bzeta(t)) \in L^{\infty}\big((0,T);\R^{|I|})$. 
For \( t \in [0, T]\), 
\begin{align}
\label{eq:grad}
\bG_i(\by(t), \bu(t), \bzeta(t)) \: 
\begin{cases}
      L_{\bu_i}(\by(t), \bu(t)) - \bzeta_{i + |I|}(t) & i \in I_a\\  
      0 & i \in I_h.
\end{cases}
\end{align}

% We note that the OCP in definition \cref{def:ocp} has pure state inequality constraints encoded. These constraints are not considered in the above adjoint system. We use a penalty method to handle these constraints as described next.  

\subsection{State Constraints}
\label{sec:constraints} 
The formulated optimal control problem has the following state constraints encoded in the admissible control set $\bU$
% \begin{align}
%     \by_{i-1}(t) - \by_i(t) - l & \geq d_{\min} & \forall i \in I_a, \ \forall t \in [0, T]\\ 
%     \by_{i+|I|}(t) & \geq 0 & \forall i\in I_a,\ \forall t \in [0, T]. 
% \end{align}
\begin{align}
    \bx_{i-1}(t) - \bx_i(t) - l & \geq d_{\text{safe}} & \forall i \in I_a, \ \forall t \in [0, T] \label{eq:headway_const}\\ 
    \bv_{i}(t) & \geq 0 & \forall i\in I_a,\ \forall t \in [0, T] \label{eq:vel_cons}, 
\end{align}
which need to be satisfied at every time step $t\in [0, T]$. 
The constraints \cref{eq:vel_cons} which ensure the non-negativity of the autonomous vehicles' velocities are linear in the control and can be written as
\begin{align}
\label{eq:velocity_const}
    v_{o,i} + \int_{0}^{t} u(s) \dd s \geq 0 & & \forall i\in I_a,\ \forall t \in [0, T]. 
\end{align}
These constraints will be discretized and handled by the optimization solver. 
The other set of constraints are in general non-linear and typical optimization solvers can not handle them efficiently. For this we employ an exact penalty method to enforce these constraints. Define the following transformation 
\begin{align}
    h_i(t, \by(t) ) = \min{\{ \bx_{i-1}(t) - \bx_i(t) - l - d_{\text{safe}}, 0\}}^2 && \forall i\in I_a,\ \forall t \in [0, T], 
\end{align}
and note that the function $h_i(t, \by(t))$ is continuously differentiable.
The constraints in \cref{eq:headway_const} can then be equivalently written as: 
\begin{align}
    \int_{0}^{T} h_i(t, \by(t)) \dd t = 0 && \forall i \in I_a. 
\end{align}
This converts an infinite number of constraints to a single constraint for each autonomous vehicle in the system. 
We then augment these constraints to the objective function $J(\by, \bu)$. For a penalty parameter $\mu \in \R_{+}$ we write the augmented objective function 
\begin{align}
    \tilde{J}(\by, \bu) = \int_{0}^{T} L(\by, \bu) \dd t + S(\by(T)) + \mu \sum_{i \in I_a} \int_{0}^{T} h_i(t, \by(t)) \dd t. 
\end{align}
%
% By denoting the penalty augmented running cost function
% \begin{align}
%     \tilde{L}\big(\by(t), \bu(t)\big) = & L\big(\by(t),\bu(t)\big) + \mu \sum_{i\in I_a} h_i(t, \by(t))
% \end{align}
% we can use the same adjoint approach described in \cref{sec:gradient_computation} to compute the gradient of the objective function using $\tilde{L}\big(\by(t), \bu(t)\big)$ in place of $L\big(\by(t), \bu(t)\big)$. 
%
In our numerical implementation, we iteratively adjust the penalty parameter $\mu$ by gradually increasing its value until the constraint violations are within an acceptable tolerance. 

% and corresponding penalty function 
% \begin{align}
%     \tilde{J}(\bx,\bv,\bu) \: & \tilde{L}\big(\bx(t),\bv(t),\bu(t)\big) \dd t + S\big(\bx(T),\bv(T)\big). 
% \end{align}
% Note that the functions $h_i(t, \by(t))$ are differentiable everywhere. Thus, this penalty function $\tilde{J}$ can straightforwardly replace the original objective functional $J$ in the adjoint deviation in the previous section. We define the notation $\tilde{\bG}_i(\by(t), \bu(t), \bzeta(t))$ for the continouse time gradient of the penalty function. 

\subsection{Discretization Scheme} 
\label{sec:disc_scheme}
We discretize the continues-time optimal control problem through the parametrization of the control function and numerical integration of the system states and adjoint variables. 
We briefly describe this discretization scheme in the following. 
We refer to \cite{gerdts2023optimal} for possible alternative discretization methods. 

\subsubsection{Control Parametrization}
We partition the time horizon $[0, T]$ with the grid 
\begin{align}
    \mathcal{T} = \{\tau_0, \tau_1, \dots, \tau_p:\ p\in \R_{+},\ \tau_0 = 0,\ \tau_p = T,\ \tau_{k-1} < \tau_k, \ \forall k =1, \dots p \}
\end{align}
% $\mathcal{T} = [\tau_{0}, \dots \tau_p]$, where $\tau_{i-1} \leq \tau_i$ for all $i = 1, 2, \dots, p$ and $\tau_{0} = 0$ and $\tau_p = T$. 
The control is then approximated by a piece-wise constant function 
\begin{align}
\label{eq:pwc_control}
    \hat{\bu}(t;\bomega) = \sum_{k = 1}^{p} \bomega^{k} \chi_{[\tau_{k-1}, \tau_k)}(t), 
\end{align}
where $\bomega^{k} \in \R^{|I|}$ is the parameters vector corresponding to the $k^{th}$ interval and 
\begin{align*}
    \chi_{Z}(t) = \begin{cases}
        1 & \text{if } t \in Z\\
        0 & \text{otherwise},  
    \end{cases}
\end{align*}
This piece-wise constant parametrization is a common choice in the literature due to its simplicity and convergence properties \cite{lin2014control}. 
However, one could consider piece-wise linear (and continuous) parametrization or other more regular basis functions. 

Let $\bomega = \left[\bomega^1, \bomega^2, \dots, \bomega^{p}\right] \in \R^{p|I|}$ be the combined parameters vector for all intervals. These are the optimization variables of the discrete problem. 
We define the feasible parameter set 
\begin{align}
    \hat{\bU}_{\mathcal{T}} = \{\bomega \in \R^{p|I|} : \bomega^{k} \in \hat{\bU},\ k = 1, 2, \dots, p\}. 
\end{align}
where
\begin{align}
    \hat{\bU} = \{\bomega^k \in \R^{|I|} : \bomega_i^{k} \equiv 0 \ \forall i \in I_h \quad a_{\min}\leq \bomega_i^k \leq a_{\max} \ \forall i \in I_a\}. 
\end{align}

The linear velocity inequality constraints in \eqref{eq:velocity_const} are evaluated by approximating the system states. We use a third-order explicit \textit{Runge-Kutta} (RK3) integration scheme with the fixed mesh \(\mathcal{T}\) to approximate the states and evaluate these constraints.  

\subsubsection{State and Adjoint Variables Integration} Using the piece-wise constant control in \eqref{eq:pwc_control}, we write the system dynamics as 
\begin{align}
    \dot{\by}(t) = \bbf(t, \by(t), \bomega^{k}) & & \forall t \in [\tau_{k-1}, \tau_k), \ \forall k=1, 2, \dots, p, 
\label{eq:approximate_dynamics}
\end{align}
and denote by $\y[\bomega]$ the solution of the dynamics corresponding to this parametrization.
We similarly write the adjoint system as
\begin{align}
    \dot{\bzeta}(t)  & = -L_{\by}(\by(t), \bomega^{k}) + \bbf^{\top}_{\by}(t, \by(t), \bomega^{k}) \bzeta(t) & & \forall t \in [\tau_{k-1}, \tau_k), \ \forall k=1, 2, \dots, p, 
\label{eq:approximate_adjoint}
\end{align}
and denote by $\bzeta[\bomega]$ the solution to the adjoint system. 

The solutions $\y[\bomega]$ and $\bzeta[\bomega]$ are approximated using an RK3 integration scheme on a fixed grid. 
The use of the fixed grid, rather than the common adaptive grid used for RK schemes, is to ensure that both systems of ODEs are solved on the same time discretization. 
The use of adaptive mesh size creates fundamental difference in the numerical scheme which is not considered in this study. 
Further, a mesh which can potentially change in each gradient evaluation can also cause issues in the optimization routine as we would be optimizing over the discretization noise which changes each iteration.
The fixed grid on which the states and adjoint variables are evaluated
\begin{align}
    \mathcal{S} = \{t_{0}, t_1, \dots, t_r:\ r\in \R_{+},\ t_0 = 0,\ t_p = T,\ t_{i-1} < t_i, \ \forall j =1, \dots r \} 
\end{align}
is selected as a refinement of the control grid (i.e. \(r >> p\) and \(\mathcal{T} \subset \mathcal{S})\). 
This is done to avoid numerical instability and ensure that the objective function is sensitive to every control parameter in the vector $\bomega$.
%
% If we would chose both the time grid of the control and the states identically, the time-discretization would be very sensitive on the control variable and by choosing a finer and coarser grid as suggested we somewhat decouple this effect as we might otherwise expect to optimize within the noise of the discretization.

Using the piece-wise control parametrization and the approximate state and adjoint variables yields the following approximation of the objective function 
\begin{align}
\label{eq:approx_objective}
    \hat{J}(\bomega)& =  \sum_{k = 1}^{p}\int_{\tau_{k-1}}^{\tau_k} L\big(\y[\bomega](t),\bomega^{k}\big)\dd t + S\big(\y[\bomega](T)\big). 
\end{align}
And the following approximation of the gradient of the objective function with respect to the parameters vector $\bomega$ 
\begin{align}
\label{eq:approx_grad}
    \hat{\bG}(\bomega) = \int_{\tau_{k-1}}^{\tau_k} \bG(\by[\bomega](t), \bomega^{k}, \bzeta[\bomega](t)) \dd t && \forall k = 1, 2, \dots, p. 
\end{align}
The integrals are approximated using a trapezoidal integration scheme on the grid $\mathcal{S}$. 

\begin{remark}[Choice of the integration scheme]
    The RK3 scheme is a suitable choice given the piece-wise constant control parametrization as the discontinuity in the control does not affect the quality of the state integration. To see this, we note that RK3 evaluates the controller at points $\bu^{p}(t| \bsigma), \bu^{p}(t + 0.5t | \bsigma)$ and $\bu^{p}(t + 0.75t| \bsigma)$ which are all the same to under our control parameterization. This also ensures that the states approximated on the grid $\mathcal{T}$ used to evaluate the constraints, and the states approximated on the finer grid $\mathcal{S}$ used to evaluate the objective function and its gradient are comparable. 
\end{remark}

\subsection{Optimization Solver}
The discretization scheme discussed in section \cref{sec:disc_scheme} defines a finite optimization problem with $p \times |I_a|$ effective variables and $p \times |I_a|$ linear inequality constraints, and $2\times |I_a|$ box constraints on the control variables. 
In our implementation, we use the sequential quadratic programming (SQP) implementation in MATLAB's \textit{fmincon} solver \cite{MATLAB}. The problem we formulate is in general non-convex, thus any gradient-based solver we use is only guaranteed to converge locally. This makes the initialization of the algorithm crucial and we will discuss this in further details in \cref{sec:experiemnts}. 
% \todo[inline]{It might be better to use just a gradient algorithm. I would advise to test one time the code by CT Kelley \url{https://ctk.math.ncsu.edu/matlab_darts.html}}
%
We summarize the proposed numerical approach in \cref{alg}. 
 
\begin{algorithm}[H] 
   \caption{Optimal Control Solver Using Adjoint Gradients}
\label{alg}
 \KwData{Lead vehicle trajectory $\xl(t)$; Initial guess $\bomega^{(0)}$; Initial penalization parameter $\mu^{(0)}$; Initial conditions of the platoon $\bx_{\circ}$, $\bv_{\circ}$}
 \KwResult{Locally optimal control parameters $\bomega^{\star}$}
 \While{constraints are not satisfied}{
  \For{\(i = 1:N\)}{\;\\
    Run a forward simulation of the system dynamics in \cref{eq:approximate_dynamics} using RK-3\;\\
    Evaluate the objective function as in \cref{eq:approx_objective}\;\\
    Run a backward simulation of the adjoint system in \cref{eq:approximate_adjoint} using RK-3\;\\
    Evaluate the gradient function as in \cref{eq:approx_grad}\;\\
    Take gradient step and update $\bomega^{(i)}$\;
  }\;
  Set $\bomega^{(0)} = \bomega^{(N)}$ \;\\
   Update $\mu$\;
 }
\end{algorithm}
\section{Numerical Experiments} 
\label{sec:experiemnts}
In this section we instantiate the optimal control problem in \cref{def:ocp} with a set of concrete experiments. 
%to illustrate the use of the proposed numerical approach (in \cref{sec:method}) in solving problems for different data choices. 
%
We also numerically highlight the capabilities and limitations of the autonomous vehicles' ability to regulating traffic.

\subsection{Problem Instant}
We consider an instance of the optimal control problem with the goal of \textit{smoothing stop-and-go waves} and \textit{reducing the energy consumption} of a mixed-autonomy platoon of vehicles following a pre-specified lead vehicle trajectory. 
We use the $L^2$-norm of the acceleration of the vehicles as a simple proxy for these two objectives. 
Precisely, we consider the objective function
\begin{align}
\label{eq:acc_objective}
    J(\bx, \bv, \bu) = \sum_{i\in I_a} \int_{0}^{T} \bu_i^2(t)\dd t + \sum_{i \in I_h} \int_{0}^{T}\Acc^2\big(\bx_{i-1}(t), \bx_i(t), \bv_{i-1}(t), \bv_{i}(t)\big) \dd t, 
\end{align}
where the terminal cost is set to $S(\bx(T), \bv(T)) = 0$. 

\begin{remark}[Fuel consumption model]
\label{remark:energy_function}
The literature is rich with energy models with varying levels of fidelity aimed at computing the fuel consumption of a given trajectory. 
We consider one such model proposed in \cite{lee2021integrated} which can be used to evaluate the fuel consumption of a trajectory by integrating an instantaneous energy function $E(v, u)$ over the time horizon $[0, T]$. 
This function can be partially integrated and takes the form 
\begin{equation}
\begin{aligned}
\int_{0}^{T}E(\bv_{i}(t),\bu_{i}(t))\dd t & =C_{0}+ C_{1}\bx_{i}(T)+p_{0}\bv_{i}(T) \\
&\quad +\tfrac{p_{1}}{2}\bv(T)^{2} +\tfrac{p_{2}}{3}\bv_{i}(T)^{3}+\sum_{k=2}^{3}C_{k}\int_{0}^{T}\bv_{i}(t)^{k}\dd t\\
&\quad +\int_{0}^{T}\!\!\! q_{0}\max(\bu_{i}(t),0)^{2}+q_{1}\max(\bu_{i}(t),0)^{2}\bv_{i}(t)\dd t,
\end{aligned}
\label{eq:energy_integrated_out}
\end{equation}
where \(i \in I\) and the parameters $C_j, p_j$, and $q_k$ are calibrated using experimental data with \(j \in \{1, 2, 3\}\) and \(k \in \{1, 2\}\).
We note that this energy function does not incur a cost on negative acceleration. 
This can be exploited if we optimize this function and results in a solution trajectory in which the controlled acceleration drops heavily at the end of the time horizon as all quantities in the objective functions will then decrease as well (see \cref{eq:energy_integrated_out}).
This behavior is an artifact of the energy model and the fact that we optimize over a fixed time horizon. 
For this reason, we refrain from using the model as an objective function and only use it for evaluation.
\end{remark}

Optimizing the $L^2$ norm of the acceleration alone (or even the energy function) can result in the AV driving at a slower speed than its leader. 
This behavior can negatively affect other metrics of the platoon such as the throughput, density, and average velocity. 
There are many possible design choices to remedy this effect and pull the AV to exit the road. 
Here, we do so by imposing additional constraints on the trajectory of the AV to regulate its headway and ensure that it does not fall far behind its leader. 
We introduce the following additional set of constraints 
\begin{align}
\label{eq:max_headway}
    \bx_{i-1}(t) - \bx_i(t) - l & \leq d_{\max} & \forall i \in I_a, \ \forall t \in [0, T]
\end{align}
for parameter $d_{\max} \in \R_{+}$. 
We handle these constraints through the same penalization approach used for the minimum headway constraints described in \cref{sec:constraints}. 

\begin{remark}[Well-posedness under maximum headway constraint]
We note that the well-posedness results shown in \cref{cor:well-posedness_system} do not take into consideration these maximum headway constraints introduced in \cref{eq:max_headway}. 
These results can be extended for these constraints. 
However, here we will limit our discussion to the numerical solution of the problem under such constraints. 
\end{remark}

In all experiments, we do not impose the box constraints on the control as the objective function favors small acceleration. 
We will demonstrate through numerical examples that the solution remains bounded and is proportional to the acceleration of the leader.

Based on the above description we define the following instance of the problem
\begin{equation}
\begin{aligned}
\inf_{\bu\in \R^{|I|}} & \quad  \int_{0}^{T}\!\!\!\! \sum_{i \in I_a}\! \bu_i^2(t)\!+\!\! \sum_{i \in I_h} \! \big(\Acc^2(\bx_{i-1}(t), \bx_i(t), \bv_{i-1}(t), \bv_{i}(t))\big) \dd t\\
     \text{ where } & \quad (\bx, \bv) \text{ satisfies } \cref{eq:dynamics} \text{ and } \forall i \in I_a,\ \forall t\in[0,T] \\ 
    & \quad \bx_{i-1}(t) - \bx_i(t) - l \geq d_{\text{safe}}\\
     & \quad \bx_{i-1}(t) - \bx_i(t) - l \leq d_{\max}\\
     & \quad  \bv_i(t) \geq 0.
\end{aligned}
\label{eq:ocp_instance}
\end{equation}

\subsection{Experiments} We perform three sets of experiments to investigate the effect of introducing autonomous vehicles with different platoon configurations and with variants of the objective function in \ref{eq:acc_objective}.

\paragraph{Parameters Setup}
The problem constraints are parameterized by the safety distance in \cref{def:U} and the maximum allowable headway in \cref{eq:max_headway}. 
We set the safety distance $d_{\text{safe}} = 5 m$ and the maximum headway $d_{\max} = 120 m$. 
To discretize the continuous time problem we consider equidistant grids for both the control and state variables (see \cref{sec:disc_scheme}). 
The control mesh size is selected to be $5$ second while the states mesh size is $0.1$ seconds. 

\paragraph{Leader's Trajectory} 
All experiments in this section are performed with an empirical lead vehicle trajectory collected on a segment of the I-24 highway in Nashville, TN \cite{osti_1855608}. 
This trajectory (shown in \cref{fig:empirical_trajectory}) resembles a congestion scenario with stop-and-go traffic patterns.  
% Each experiment conducted in this section is performed with one of two leader trajectories that resemble a congestion scenario. 
% %
% The first is a simple benchmark trajectory with a step acceleration profile as shown in \cref{fig:benchmark_leader}. 
% %
% The other is an experimental trajectory (shown in \cref{fig:empirical_trajectory}) collected on segment of the I-24 highway in Nashville, TN \cite{}. 
% (Figure \ref{} shows a map of where the trajectory was collected).
%
% \begin{figure}[h!]
%     \centering
%     \includegraphics[width =0.6\linewidth]{fig/map.png}
%     \caption{Portion of the I-24 highway near Nashville, TN on which the experimental trajectory was collected.}
%     \label{fig:map}
% \end{figure}
%
\begin{figure}[ht]
    \centering
    % \begin{subfigure}[b]{0.46\textwidth}
    %     \centering
    %     \includegraphics[width=\textwidth]{figures/benchmark_leader.png}
    %     \caption{Benchmark trajectory}
    %     \label{fig:benchmark_leader}
    % \end{subfigure}
    % ~
    \begin{subfigure}[b]{0.46\textwidth}
        \centering
        \includegraphics[width=\textwidth]{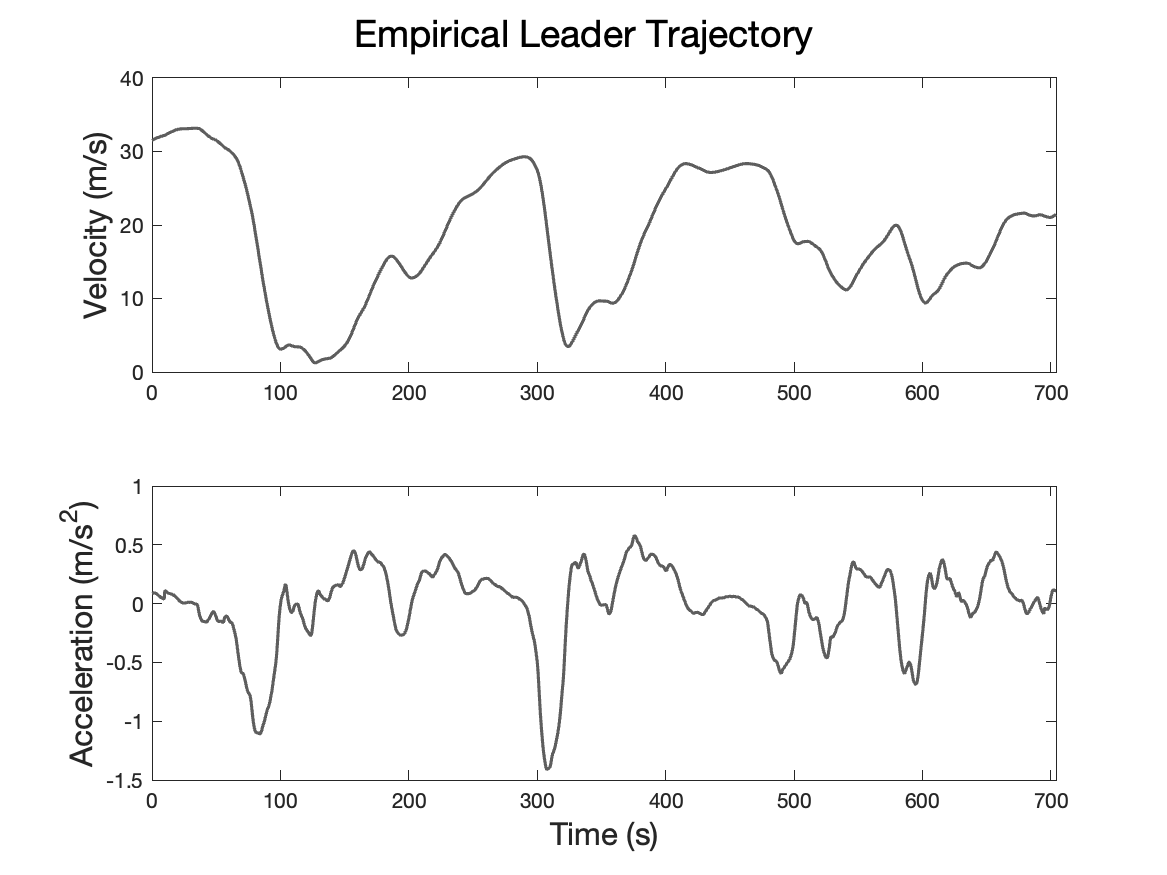}
        \caption{Leader's trajectory}
        \label{fig:empirical_trajectory}
    \end{subfigure}
    ~
    \begin{subfigure}[b]{0.46\textwidth}
        \centering
        \includegraphics[width=\textwidth]{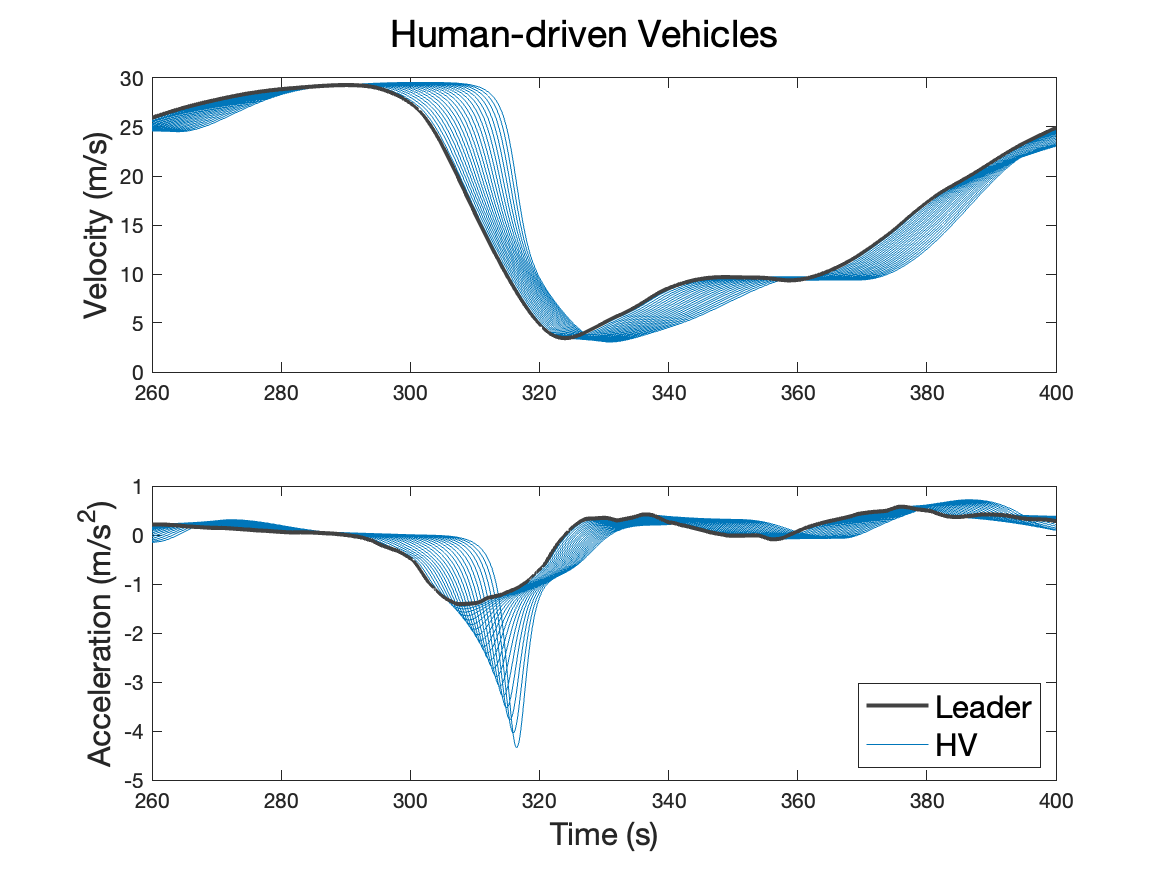}
        \caption{Trajectories of Bando-FtL model}
        \label{fig:hv_empirical}
    \end{subfigure}
    \caption{(a) Velocity and acceleration profiles of the considered lead vehicle. (b) Segments showing velocity and acceleration of a platoon of 20 human-driven vehicles following the leader's trajectory in (a). These segments highlight the instability in the behavior of the Bando-FtL car following model used in this study.}
    \label{fig:leader}
\end{figure}

\paragraph{Human-driven Vehicles}
In all experiments, the human-drive vehicles' (HVs) acceleration is modeled by the Bando-FtL model described in \cref{defi:bando_ftl} with parameters $\alpha = 0.1$ and $\beta = 525$.
In \cref{fig:hv_empirical}, we illustrate the behavior of a full human-driven platoon consisting of 20 vehicles following the considered lead vehicle's trajectory. 
Note that the model propagates and amplifies the stop-and-go waves -- a behavior that indicates string instability.
Such behavior is also observed in commercially deployed adaptive cruise control systems \cite{gunter2020commercially}. It is also believed that human driving behavior exhibits such instability. 
In the experiments we present here we aim to investigate the effect of introducing a various number of AVs on minimizing such instability and dampening stop and go waves in this platoon. 
%
% \begin{figure}[h]
%     \centering
%     \begin{subfigure}[b]{0.46\textwidth}
%         \centering
%         \includegraphics[width=\textwidth]{figures/benchmark_hv.png}
%         \caption{Benchmark trajectory}
%     \end{subfigure}
%     ~
%     \begin{subfigure}[b]{0.46\textwidth}
%         \centering
%         \includegraphics[width=\textwidth]{figures/empirical_hv.png}
%         \caption{Empirical trajectory}
%         \label{fig:hv_empirical}
%     \end{subfigure}
%     \caption{Segments showing velocity and acceleration of a platoon of 20 human-driven vehicles following the two leader trajectories from \cref{fig:leader}. These segments highlight the instability in the behavior of the Bando-FtL car following model used in this study.}
%     \label{fig:hv}
% \end{figure}

\paragraph{Evaluation Metrics} 
We use two metrics to evaluate the performance of a platoon. 
The first is the total acceleration, evaluated as the sum of the \(L^2\) norm of the acceleration of all vehicles in the platoon. 
This metric is also used as the objective function of the optimal control problem. 
The second metric is the total fuel consumption, where the consumption of each vehicle is evaluated as in \cref{eq:energy_integrated_out}. 
Here we report the percentage improvement in each of these two metrics relative to the baseline of an all human-driven (all following Bando-FtL model) platoon of the same size.

\subsubsection{Experiment I: Penetration Rate of AVs} In these experiments, we investigate the effect of varying the AVs penetration rate on the performance of the platoon. 
Namely, we consider a fixed platoon size of 20 vehicles and consider 5 different scenarios by introducing an increasing number of AVs from 1 ($5\%$ penetration) to 5 ($25\%$ penetration). 
The first AV is placed directly behind the leader and each additional AV is separated from its proceeding AV by 3 HV as shown in \cref{fig:2av_platoon}. 

\begin{figure}[ht]
    \centering
    \includegraphics[width=\textwidth]{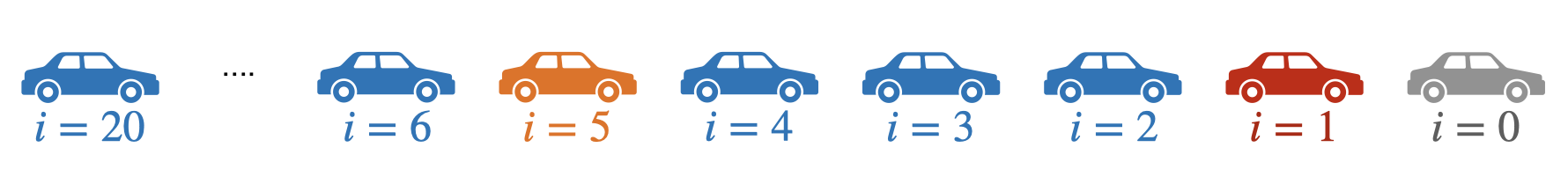}
    \caption{Illustration of a the positions of the first 2 AVs (red and orange) in a platoon of 20 vehicles. }
    \label{fig:2av_platoon}
\end{figure}

As mentioned earlier, the proposed numerical approach is only guaranteed to converge locally and it is essential to find a good initialization for the optimization algorithm. 
To that end we initialize the control function of the first AV using the solution of the problem instance in \cref{eq:ocp_instance} with a single AV following the given leader trajectory. 
We call this $\bu_{\text{init}}(t)$.  
Note that with a single AV and no HVs in the platoon the problem can be solved efficiently due to the linearity of the constraints.  
The control of the AVs in each subsequent scenario is initialized with the solution of the corresponding AV in the previous scenario and the new additional AV is initialized with $\bu_{\text{init}}(t)$.

\begin{center}
\begin{table}[tb]
\centering
\begin{tabular}{|c c c c c|}
 \hline
 Platoon & Acceleration & \% Reduction & Fuel consumption & \% Reduction \\
 \hline
 20 HV & 2381.6934 & - & 10740.3 & - \\ 
 \hline
 1 AV, 19 HV & 704.39 & 70.42\% & 8991.2 & 16.29\% \\
 \hline
 2 AV, 18 HV & 546.32 & 77.06\% & 8442.8 & 21.39\% \\
 \hline
 3 AV, 17 HV & 433.75 & 81.78\% & 8117.3 & 24.42\% \\
 \hline
 4 AV, 16 HV & 392.72 & 83.51\% & 7994.6 & 25.56\% \\ 
 \hline
 5 AV, 15 HV & 366.79 & 84.60\% & 7948.5 & 26.14\% \\ 
 \hline
\end{tabular}
\captionof{table}{Summery of the total acceleration and total fuel consumption of the vehicles in a platoon with increasing penetration rate of AVs. The percentage reduction is computed relative to the baseline of a full human-driven platoon (first row).}
\label{tab:penetration_empirical}
\end{table}
\end{center}

We summarize the performance of the 5 scenarios in terms of the total fuel consumption and total acceleration in \cref{tab:penetration_empirical}. 
In \cref{fig:penetration_1av} we show the trajectory of the platoon with a single AV. 
We notice, qualitatively from the figure, that the introduction of the AV has a significant impact on reducing the acceleration and velocity fluctuations of the entire platoon. 
In some regions the AV can almost fully smooth the fluctuation in the leader's acceleration (see the interval around $t = 200s$). 
With larger fluctuations in the leader's acceleration, the AV can reduce the magnitude of these fluctuations significantly (see the region around $t = 300s$). 
This smoothing effect propagates to the HV following the AV. 
%
%While the instability of the car-following model still observed, the presence of the AV reduces its effect. 
%
Focusing again on the interval around $t = 300s$ (the largest peak in the lead vehicle's acceleration), the largest magnitude of the HVs acceleration is around $1.5$ times the largest magnitude of the AV in that interval. 
We compare this behavior to the baseline in \cref{fig:hv_empirical}, where the maximum HV's declaration is around $3$ times that of the lead vehicle. We make the observation that the AV's effect is not just shifting the acceleration profile but also dampening it. 
The effect of the AV is quantified in \cref{tab:penetration_empirical} showing that  introducing the AV reduces the $L^2$-norm of the platoon's acceleration by $70.42\%$ and its fuel consumption by $16.29\%$.
\begin{figure}[ht]
    \centering
    \includegraphics[width=\textwidth]{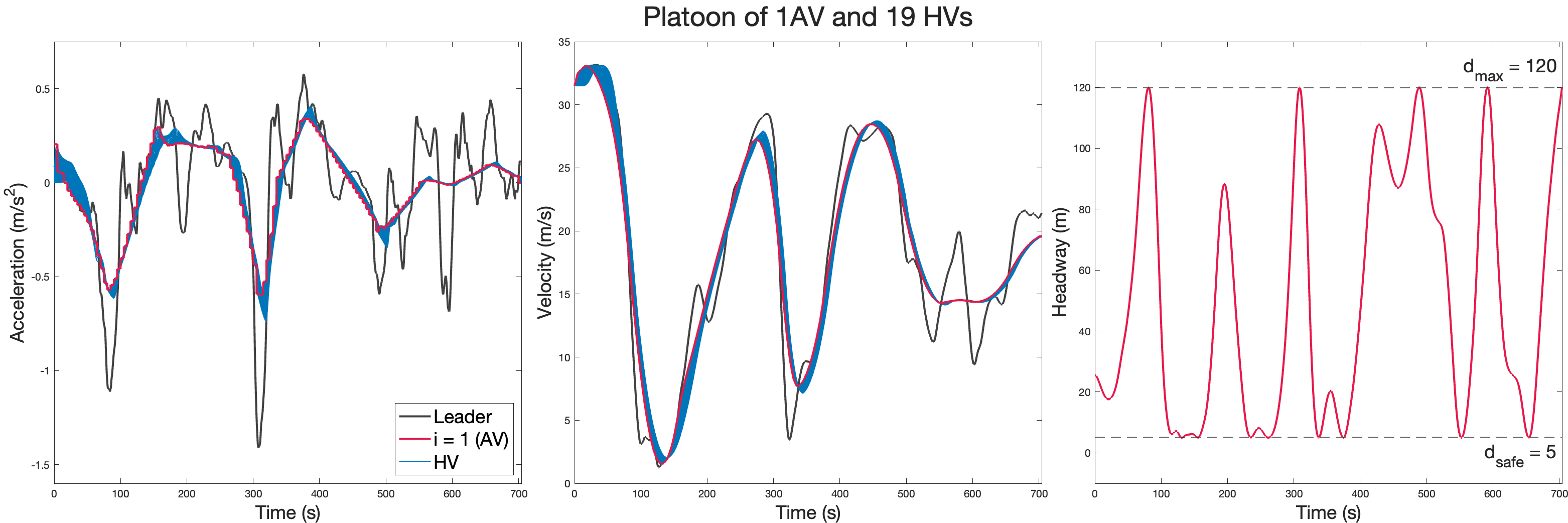}
    \caption{Acceleration (left) and velocity (middle) profiles of the 20 vehicles in the platoon with 1AV. The right tile shows the space headway of the AVs in the platoon.}
    \label{fig:penetration_1av}
\end{figure}

One thing we note from \cref{fig:penetration_1av} is that the effect of the AV diminishes for further followers. 
In \cref{fig:penetration_2av} we show the platoon trajectories with a second AV separated by 3 HV form the first one. 
The introduction of this AV further dampens the acceleration of the platoon and achieves a reduction of $77.06\%$ in the acceleration and $21.39\%$ in the fuel consumption. 
\begin{figure}[ht]
    \centering
    \includegraphics[width=\textwidth]{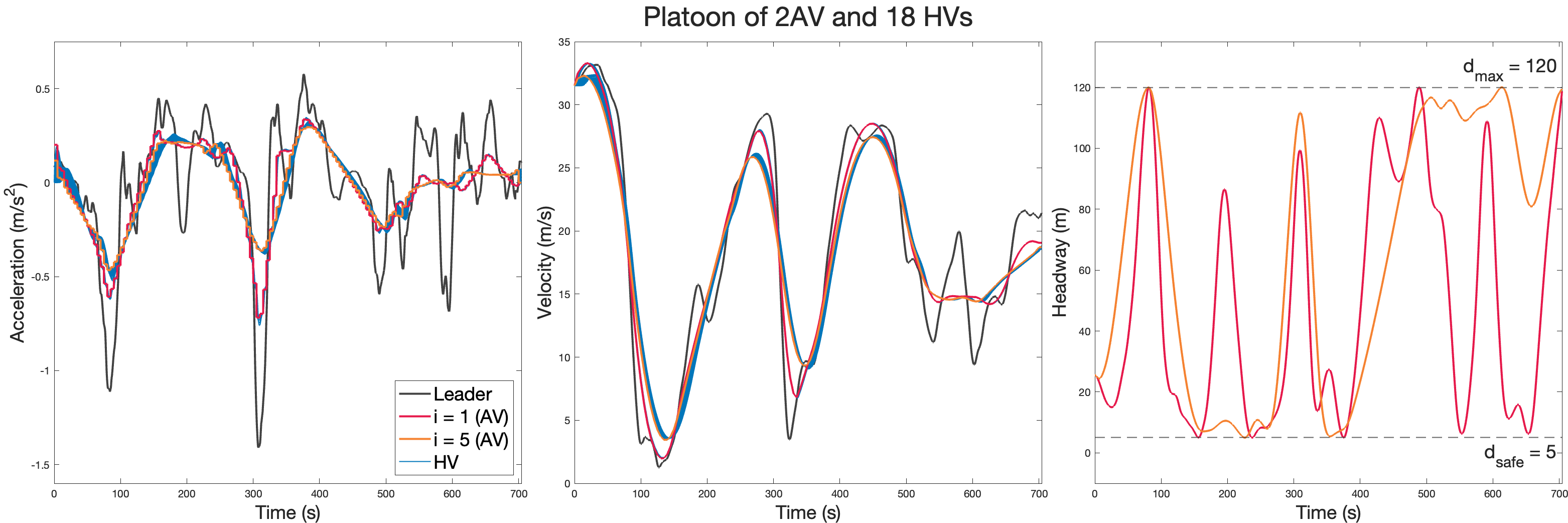}
    \caption{Acceleration (left) and velocity (middle) profiles of the 20 vehicles in the platoon with 2AV. The right tile shows the space headway of the AVs in the platoon.}
    \label{fig:penetration_2av}
\end{figure}

As we increase the penetration rate of the AVs, both the total acceleration and fuel consumption reduce, however, there is a clear diminishing return for adding more AVs. 
We note from \cref{fig:penetration_5av}, which shows the trajectories of the platoon with 5 AVs, that the last AV is still far from having a steady speed. 
Further, we notice that the maximum headway constraints are not active anywhere for this vehicle indicating that the last vehicles' behavior might not be fully optimized. 
One possible explanation for this is due to ill-conditioning of the problem. 
Notice that the further upstream the AV is located the less sensitive the objective function (and the gradients) become to changes in its control signal. 
One possible solution that we do not explore here is to use a time-weighted objective function.
\begin{figure}[ht]
    \centering
    \includegraphics[width=\textwidth]{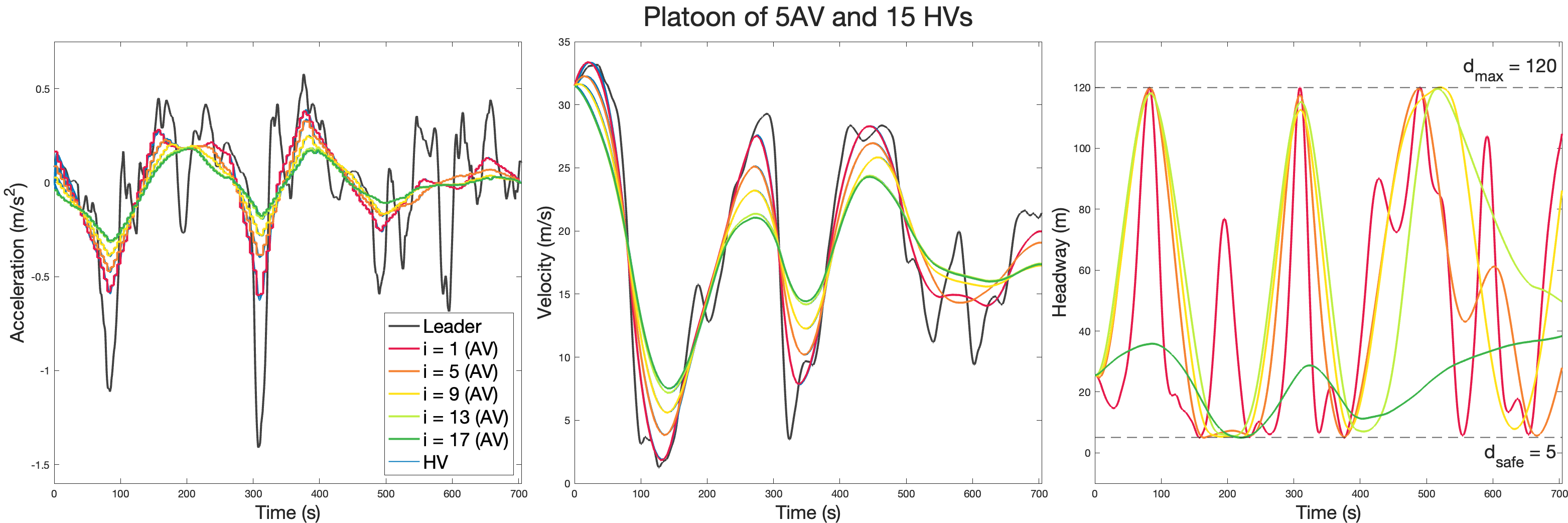}
    \caption{Acceleration (left) and velocity (middle) profiles of the 20 vehicles in the platoon with 5AV. The right tile shows the space headway of the AVs in the platoon.}
    \label{fig:penetration_5av}
\end{figure}

Lastly, we comment on the general qualitative behavior of the AVs. 
We note from \cref{fig:penetration_1av,,fig:penetration_2av,,fig:penetration_5av} that the AVs smooth the leader's trajectory by proactively and gradually slowing down and opening large gaps in anticipation of large drops in the velocity of the vehicle in front of it. 
This allows the vehicle to maintain a more steady speed profile and reduce the fluctuations in its acceleration.
In these plot we also note that all AVs respect the safety headway and maximum headway constraints. 
Further, we note that in all experiments the acceleration of the AVs is bounded by the minimum and maximum acceleration of the leader.

\subsubsection{Experiment II: Greedy Optimization}
In this experiment we consider a ``greedy'' version of the optimization problem introduced in \cref{eq:ocp_instance}. 
In this version the objective function is changed to consider the behavior of the AVs only. 
Namely, we optimize the following objective function 
\begin{align}
    J(\bx, \bv, \bu) = \sum_{i\in I_a} \int_{0}^{T} \bu_i^2(t) \dd t, 
\end{align}
While this optimization problem still depends on the car following model of choices, it puts less emphasis on optimizing its performance.
Through this approach, we obtain results that are robust to the choice of the car following model at the expense of slight performance degradation.  

\begin{center}
\begin{table}[ht]
\centering
\begin{tabular}{|c c c c|}
 \hline
 Platoon & AV Acceleration & Total acceleration & Total fuel consumption \\ [0.5ex] 
 \hline
 1 AV, 19 HV & -1.4\% & +1.4\% & +1.1\% \\
 \hline
 3 AV, 17 HV & -0.9\% & +2.4\% & +1.3\% \\
 \hline
 5 AV, 15 HV & +11.6\% & +11.6\% & +2\% \\ 
 \hline
\end{tabular}
\captionof{table}{Comparison between the platoon performance metrics under greedy vs.\ full optimization. The percentages show relative increase in the metrics value due to the greedy optimization (negative percentages indicate that the greedy optimization performs better).}
\label{tab:greedy_empirical}
\end{table}
\end{center}

In \cref{tab:greedy_empirical}, we compare the performance metrics of the platoon in which the AVs are greedily optimized to the fully optimized platoon in \cref{tab:penetration_empirical}.
The table reports the relative difference in the performance metrics due to the greedy optimization. 
\begin{align*}
    100 \times \frac{\text{greedy optimization metrics} - \text{full optimization metrics}}{\text{greedy optimization metrics}}
\end{align*}
For small penetration rates we notice very slight degradation in both the total acceleration and fuel consumption when comparing the greedy optimization to the full platoon optimization. 
For larger penetration rates, we notice a more signification degradation in the performance under the greedy optimization scheme. 
Further, we notice that even the acceleration of the AVs is higher (worse) in the greedy solution. 
This highlights a drawback of such a greedy optimization approach, when the following vehicles are not explicitly considered, cooperation between the different AVs can be less effective. 

\subsection{Conclusion} In this work, we formulated the problem of controlling the acceleration of autonomous vehicles in a mixed autonomy platoon as an optimal control problem. We analyzed the properties of the posed problem, demonstrating the well-posedness of the car-following model—a key requirement for proving the existence of a solution. This analysis can extend such results to other car-following models, provided they have such well-posedness properties.

We also develop a numerical scheme using gradient-based optimization solvers, employing the adjoint method to evaluate the derivatives and using a penalty approach to handle the state constraints. Looking forward, there are several avenues for improving the numerical methods employed in this work. First, applying higher-order optimization methods could further enhance convergence rates and solution accuracy. Second, the incorporation of Lagrange multipliers for handling the problem's constraints could provide additional theoretical insight and help us understand how these constraints affect the numerical performance of the solver. Additionally, an interesting direction for future work involves deriving the optimality conditions of the problem and applying indirect solution methods to solve the resulting system. This can potentially enhance the accuracy of the solution and overcome numerical difficulties resulting from the insensitivity of the objective function to some control variables. Overall, exploring ways to enhance numerical performance by both increasing the accuracy of the solution and speeding up the computations remains an important challenge.

Numerically, we assessed the impact of introducing automation in a predominantly human-driven platoon. Our experiments indicate significant improvements in two critical metrics: total acceleration and total fuel consumption. A natural next step is to conduct a systematic benchmark against existing longitudinal control methods to evaluate the performance of our optimized solution relative to other state-of-the-art approaches.

In summary, while this work provides a solid theoretical foundation for the optimal control of car-following models, future contributions will focus on refining and analyzing the numerical methods and improving the computational efficiency to ensure practical implementation in real-world applications.

\newpage
\bibliographystyle{abbrv}
\bibliography{bibliography} % if more than one, comma separated

\begin{thebibliography}{10}

\bibitem{ahmed2021review}
H.~U. Ahmed, Y.~Huang, and P.~Lu.
\newblock A review of car-following models and modeling tools for human and
  autonomous-ready driving behaviors in micro-simulation.
\newblock {\em Smart Cities}, 4(1):314--335, 2021.

\bibitem{albeaik_2022_limitations}
S.~Albeaik, A.~Bayen, M.~T. Chiri, X.~Gong, A.~Hayat, N.~Kardous, A.~Keimer,
  S.~T. McQuade, B.~Piccoli, and Y.~You.
\newblock Limitations and improvements of the intelligent driver model ({IDM}).
\newblock {\em SIAM Journal on Applied Dynamical Systems}, 21(3):1862--1892,
  2022.

\bibitem{angrisani2025}
F.~Angrisani and F.~Rampazzo.
\newblock Goh and legendre–clebsch conditions for nonsmooth control systems.
\newblock {\em SIAM Journal on Control and Optimization}, 63(1):86–111, Jan.
  2025.

\bibitem{bando1995dynamical}
M.~Bando, K.~Hasebe, A.~Nakayama, A.~Shibata, and Y.~Sugiyama.
\newblock Dynamical model of traffic congestion and numerical simulation.
\newblock {\em Physical review E}, 51(2):1035, 1995.

\bibitem{bayen2004adjoint}
A.~M. Bayen, R.~L. Raffard, and C.~J. Tomlin.
\newblock Adjoint-based constrained control of eulerian transportation
  networks: Application to air traffic control.
\newblock In {\em Proceedings of the 2004 American Control Conference},
  volume~6, pages 5539--5545. IEEE, 2004.

\bibitem{brezis}
H.~Brezis.
\newblock {\em Functional analysis, {S}obolev spaces and partial differential
  equations}.
\newblock Universitext. Springer, New York, 2011.

\bibitem{cao2003adjoint}
Y.~Cao, S.~Li, L.~Petzold, and R.~Serban.
\newblock Adjoint sensitivity analysis for differential-algebraic equations:
  The adjoint dae system and its numerical solution.
\newblock {\em SIAM journal on scientific computing}, 24(3):1076--1089, 2003.

\bibitem{chavent2010nonlinear}
G.~Chavent.
\newblock {\em Nonlinear least squares for inverse problems: theoretical
  foundations and step-by-step guide for applications}.
\newblock Springer Science \& Business Media, 2010.

\bibitem{darbha1999intelligent}
S.~Darbha and K.~R. Rajagopal.
\newblock Intelligent cruise control systems and traffic flow stability.
\newblock {\em Transportation Research Part C: Emerging Technologies},
  7(6):329--352, 1999.

\bibitem{davis2004effect}
L.~C. Davis.
\newblock Effect of adaptive cruise control systems on traffic flow.
\newblock {\em Physical Review E}, 69(6):066110, 2004.

\bibitem{delle2019feedback}
M.~L. Delle~Monache, T.~Liard, A.~Rat, R.~Stern, R.~Bhadani, B.~Seibold,
  J.~Sprinkle, D.~B. Work, and B.~Piccoli.
\newblock Feedback control algorithms for the dissipation of traffic waves with
  autonomous vehicles.
\newblock {\em Computational Intelligence and Optimization Methods for Control
  Engineering}, pages 275--299, 2019.

\bibitem{DelleMonache2019}
M.~L. Delle~Monache, T.~Liard, A.~Rat, R.~Stern, R.~Bhadani, B.~Seibold,
  J.~Sprinkle, D.~B. Work, and B.~Piccoli.
\newblock Feedback control algorithms for the dissipation of traffic waves with
  autonomous vehicles.
\newblock {\em Computational Intelligence and Optimization Methods for Control
  Engineering}, pages 275--299, 2019.

\bibitem{eichmeir2021adjoint}
P.~Eichmeir, T.~Lau{\ss}, S.~Oberpeilsteiner, K.~Nachbagauer, and W.~Steiner.
\newblock The adjoint method for time-optimal control problems.
\newblock {\em Journal of Computational and Nonlinear Dynamics}, 16(2):021003,
  2021.

\bibitem{fu2024kernel}
Z.~Fu, A.~Alanqary, A.~R. Kreidieh, and A.~M. Bayen.
\newblock Kernel-based planning and imitation learning control for flow
  smoothing in mixed autonomy traffic.
\newblock {\em Transportation Research Part C: Emerging Technologies},
  168:104764, 2024.

\bibitem{gerdts2023optimal}
M.~Gerdts.
\newblock {\em Optimal control of ODEs and DAEs}.
\newblock Walter de Gruyter GmbH \& Co KG, 2023.

\bibitem{givoli2021tutorial}
D.~Givoli.
\newblock A tutorial on the adjoint method for inverse problems.
\newblock {\em Computer Methods in Applied Mechanics and Engineering},
  380:113810, 2021.

\bibitem{gong2022well}
X.~Gong and A.~Keimer.
\newblock On the well-posedness of the" bando-follow the leader" car following
  model and a time-delayed version.
\newblock {\em Networks \& Heterogeneous Media}, 18(2), 2023.

\bibitem{gugat2005optimal}
M.~Gugat, M.~Herty, A.~Klar, and G.~Leugering.
\newblock Optimal control for traffic flow networks.
\newblock {\em Journal of optimization theory and applications},
  126(3):589--616, 2005.

\bibitem{gunter2020commercially}
G.~Gunter, D.~Gloudemans, R.~E. Stern, S.~McQuade, R.~Bhadani, M.~Bunting,
  M.~L. Delle~Monache, R.~Lysecky, B.~Seibold, J.~Sprinkle, et~al.
\newblock Are commercially implemented adaptive cruise control systems string
  stable?
\newblock {\em IEEE Transactions on Intelligent Transportation Systems},
  22(11):6992--7003, 2020.

\bibitem{hegyi2005optimal}
A.~Hegyi, B.~De~Schutter, and J.~Hellendoorn.
\newblock Optimal coordination of variable speed limits to suppress shock
  waves.
\newblock {\em IEEE Transactions on intelligent transportation systems},
  6(1):102--112, 2005.

\bibitem{hou2007freeway}
Z.~Hou, J.-X. Xu, and H.~Zhong.
\newblock Freeway traffic control using iterative learning control-based ramp
  metering and speed signaling.
\newblock {\em IEEE Transactions on vehicular technology}, 56(2):466--477,
  2007.

\bibitem{MATLAB}
T.~M. Inc.
\newblock {MATLAB} version: 9.13.0 (r2022b), 2022.

\bibitem{ioannou1993intelligent}
P.~Ioannou, Z.~Xu, S.~Eckert, D.~Clemons, and T.~Sieja.
\newblock Intelligent cruise control: theory and experiment.
\newblock In {\em Proceedings of 32nd IEEE Conference on Decision and Control},
  pages 1885--1890. IEEE, 1993.

\bibitem{jia2020lstm}
Y.~Jia, C.~Cai, and D.~G{\"o}rges.
\newblock An {LSTM}-based speed predictor based on traffic simulation data for
  improving the performance of energy-optimal adaptive cruise control.
\newblock In {\em 2020 IEEE 23rd International Conference on Intelligent
  Transportation Systems (ITSC)}, pages 1--7. IEEE, 2020.

\bibitem{kerner2018physics}
B.~S. Kerner.
\newblock Physics of automated driving in framework of three-phase traffic
  theory.
\newblock {\em Physical Review E}, 97(4):042303, 2018.

\bibitem{lee2021integrated}
J.~W. Lee, G.~Gunter, R.~Ramadan, S.~Almatrudi, P.~Arnold, J.~Aquino,
  W.~Barbour, R.~Bhadani, J.~Carpio, F.-C. Chou, et~al.
\newblock Integrated framework of vehicle dynamics, instabilities, energy
  models, and sparse flow smoothing controllers.
\newblock {\em arXiv preprint arXiv:2104.11267}, 2021.

\bibitem{lee2024traffic}
J.~W. Lee, H.~Wang, K.~Jang, A.~Hayat, M.~Bunting, A.~Alanqary, W.~Barbour,
  Z.~Fu, X.~Gong, G.~Gunter, et~al.
\newblock Traffic control via connected and automated vehicles: An open-road
  field experiment with 100 cavs.
\newblock {\em arXiv preprint arXiv:2402.17043}, 2024.

\bibitem{li2012microscopic}
Y.~Li and D.~Sun.
\newblock Microscopic car-following model for the traffic flow: the state of
  the art.
\newblock {\em Journal of Control Theory and Applications}, 10:133--143, 2012.

\bibitem{lichtle2022deploying}
N.~Lichtl{\'e}, E.~Vinitsky, M.~Nice, B.~Seibold, D.~Work, and A.~M. Bayen.
\newblock Deploying traffic smoothing cruise controllers learned from
  trajectory data.
\newblock In {\em 2022 International Conference on Robotics and Automation
  (ICRA)}, pages 2884--2890. IEEE, 2022.

\bibitem{lin2014control}
Q.~Lin, R.~Loxton, and K.~L. Teo.
\newblock The control parameterization method for nonlinear optimal control: a
  survey.
\newblock {\em Journal of Industrial and management optimization},
  10(1):275--309, 2014.

\bibitem{lin2017simplified}
X.~Lin, D.~G{\"o}rges, and A.~Wei{\ss}mann.
\newblock Simplified energy-efficient adaptive cruise control based on model
  predictive control.
\newblock {\em IFAC-PapersOnLine}, 50(1):4794--4799, 2017.

\bibitem{Mangasarian1966}
O.~L. Mangasarian.
\newblock Sufficient conditions for the optimal control of nonlinear systems.
\newblock {\em SIAM Journal on Control}, 4(1):139–152, Feb. 1966.

\bibitem{martinez2007safe}
J.-J. Martinez and C.~Canudas-de Wit.
\newblock A safe longitudinal control for adaptive cruise control and
  stop-and-go scenarios.
\newblock {\em IEEE Transactions on control systems technology},
  15(2):246--258, 2007.

\bibitem{Maurer1981}
H.~Maurer.
\newblock {\em First and second order sufficient optimality conditions in
  mathematical programming and optimal control}, page 163–177.
\newblock Springer Berlin Heidelberg, 1981.

\bibitem{osti_1855608}
M.~Nice, N.~Lichtle, G.~Gumm, M.~Roman, E.~Vinitsky, S.~Elmadani, M.~Bunting,
  R.~Bhadani, K.~Jang, G.~Gunter, M.~Kumar, S.~McQuade, C.~Denaro,
  R.~Delorenzo, B.~Piccoli, D.~Work, A.~Bayen, J.~Lee, J.~Sprinkle, and
  B.~Seibold.
\newblock The {I}-24 trajectory dataset, 2021.

\bibitem{papageorgiou2008effects}
M.~Papageorgiou, E.~Kosmatopoulos, and I.~Papamichail.
\newblock Effects of variable speed limits on motorway traffic flow.
\newblock {\em Transportation Research Record}, 2047(1):37--48, 2008.

\bibitem{papageorgiou2002freeway}
M.~Papageorgiou and A.~Kotsialos.
\newblock Freeway ramp metering: An overview.
\newblock {\em IEEE transactions on intelligent transportation systems},
  3(4):271--281, 2002.

\bibitem{pikulinski2021adjoint}
M.~Pikuli{\'n}ski and P.~Malczyk.
\newblock Adjoint method for optimal control of multibody systems in the
  hamiltonian setting.
\newblock {\em Mechanism and Machine Theory}, 166:104473, 2021.

\bibitem{rajamani1998design}
R.~Rajamani, S.~B. Choi, J.~K. Hedrick, B.~Law, R.~Prohaska, and P.~Kretz.
\newblock Design and experimental implementation of control for a platoon of
  automated vehicles.
\newblock In {\em ASME International Mechanical Engineering Congress and
  Exposition}, volume 15861, pages 681--689. American Society of Mechanical
  Engineers, 1998.

\bibitem{schmied2015nonlinear}
R.~Schmied, H.~Waschl, R.~Quirynen, M.~Diehl, and L.~del Re.
\newblock Nonlinear mpc for emission efficient cooperative adaptive cruise
  control.
\newblock {\em IFAC-papersonline}, 48(23):160--165, 2015.

\bibitem{stern2018dissipation}
R.~E. Stern, S.~Cui, M.~L. Delle~Monache, R.~Bhadani, M.~Bunting, M.~Churchill,
  N.~Hamilton, H.~Pohlmann, F.~Wu, B.~Piccoli, et~al.
\newblock Dissipation of stop-and-go waves via control of autonomous vehicles:
  Field experiments.
\newblock {\em Transportation Research Part C: Emerging Technologies},
  89:205--221, 2018.

\bibitem{sugiyama2008traffic}
Y.~Sugiyama, M.~Fukui, M.~Kikuchi, K.~Hasebe, A.~Nakayama, K.~Nishinari, S.-i.
  Tadaki, and S.~Yukawa.
\newblock Traffic jams without bottlenecks---experimental evidence for the
  physical mechanism of the formation of a jam.
\newblock {\em New journal of physics}, 10(3):033001, 2008.

\bibitem{suh2016empirical}
J.~Suh and H.~Yeo.
\newblock An empirical study on the traffic state evolution and stop-and-go
  traffic development on freeways.
\newblock {\em Transportmetrica A: Transport Science}, 12(1):80--97, 2016.

\bibitem{Sussmann}
H.~Sussmann.
\newblock A maximum principle for hybrid optimal control problems.
\newblock In {\em Proceedings of the 38th IEEE Conference on Decision and
  Control (Cat. No.99CH36304)}, volume~1 of {\em CDC-99}, page 425–430. IEEE,
  1999.

\bibitem{Sussmann1997}
H.~Sussmann and J.~Willems.
\newblock 300 years of optimal control: from the brachystochrone to the maximum
  principle.
\newblock {\em IEEE Control Systems}, 17(3):32–44, June 1997.

\bibitem{treiber2000congested}
M.~Treiber, A.~Hennecke, and D.~Helbing.
\newblock Congested traffic states in empirical observations and microscopic
  simulations.
\newblock {\em Physical review E}, 62(2):1805, 2000.

\bibitem{treiterer1974hysteresis}
J.~Treiterer and J.~Myers.
\newblock The hysteresis phenomenon in traffic flow.
\newblock {\em Transportation and traffic theory}, 6:13--38, 1974.

\bibitem{vinitsky2023optimizing}
E.~Vinitsky, N.~Lichtl{\'e}, K.~Parvate, and A.~Bayen.
\newblock Optimizing mixed autonomy traffic flow with decentralized autonomous
  vehicles and multi-agent reinforcement learning.
\newblock {\em ACM Transactions on Cyber-Physical Systems}, 2023.

\bibitem{walter}
W.~Walter.
\newblock {\em Differential and Integral Inequalities}.
\newblock Springer Verlag Berlin Heidelberg New York, 1970.

\bibitem{wang2021leading}
J.~Wang, Y.~Zheng, C.~Chen, Q.~Xu, and K.~Li.
\newblock Leading cruise control in mixed traffic flow: System modeling,
  controllability, and string stability.
\newblock {\em IEEE Transactions on Intelligent Transportation Systems},
  23(8):12861--12876, 2021.

\bibitem{wang2020controllability}
J.~Wang, Y.~Zheng, Q.~Xu, J.~Wang, and K.~Li.
\newblock Controllability analysis and optimal control of mixed traffic flow
  with human-driven and autonomous vehicles.
\newblock {\em IEEE Transactions on Intelligent Transportation Systems},
  22(12):7445--7459, 2020.

\bibitem{wang2021optimal}
S.~Wang, R.~Stern, and M.~W. Levin.
\newblock Optimal control of autonomous vehicles for traffic smoothing.
\newblock {\em IEEE Transactions on Intelligent Transportation Systems},
  23(4):3842--3852, 2021.

\bibitem{wang_2022_optimal}
S.~Wang, R.~Stern, and M.~W. Levin.
\newblock Optimal control of autonomous vehicles for traffic smoothing.
\newblock {\em IEEE Transactions on Intelligent Transportation Systems},
  23(4):3842--3852, 2022.

\bibitem{wu2022hierarchical}
F.~Wu and A.~M. Bayen.
\newblock A hierarchical {MPC} approach to car-following via linearly
  constrained quadratic programming.
\newblock {\em IEEE Control Systems Letters}, 7:532--537, 2022.

\bibitem{zheng2020smoothing}
Y.~Zheng, J.~Wang, and K.~Li.
\newblock Smoothing traffic flow via control of autonomous vehicles.
\newblock {\em IEEE Internet of Things Journal}, 7(5):3882--3896, 2020.

\end{thebibliography}

\end{document}